\newcommand{\R}{\mathbb{R}}
\newcommand{\E}{\mathbb{E}}
\def\ceil#1{\lceil #1 \rceil}
\def\floor#1{\lfloor #1 \rfloor}
\newtheorem{theorem}{Theorem}
\newtheorem*{theorem*}{Theorem}
\newtheorem{lemma}[theorem]{Lemma}
\newtheorem{definition}[theorem]{Definition}
\newtheorem{claim}[theorem]{Claim}
\newtheorem*{remark*}{Remark}
\numberwithin{equation}{section}
\begin{document}

\title[Essential covers]{A lower bound for essential covers of the cube}


\author{Gal Yehuda}
\address{Department of Computer Science, Technion-IIT}
\email{ygal@cs.technion.ac.il}

\author{Amir Yehudayoff}
\address{Department of Mathematics, Technion-IIT}
\email{yehudayoff@technion.ac.il}

\begin{abstract}
Essential covers were introduced by Linial and Radhakrishnan
as a model that captures two complementary properties:
(1) all variables must be included and (2) no element is redundant.
In their seminal paper, they proved that every essential cover of the $n$-dimensional hypercube
must be of size at least $\Omega(n^{0.5})$.
Later on, this notion found several applications in complexity theory.
We improve the lower bound to $\Omega(n^{0.52})$,
and describe two applications.
\end{abstract}

\maketitle

\section{Introduction}

The vertices of the hypercube can be embedded in Euclidean space
as $\{\pm 1\}^n$.
A hyperplane $h = \{z \in \R^n : \langle z, v \rangle = \mu \}$ covers the vertex $x$ if $x \in h$. 
{\em What is the minimum number of hyperplanes that are needed
to cover all vertices?}
This question is motivated by problems in combinatorial geometry~\cite{littlewood1939number},
combinatorics~\cite{alon1993covering,linial2005essential},
proof complexity~\cite{beame2017stabbing,dantchev2021depth} and more.
For an introduction to this and related topics, see the survey by Saks~\cite{saks_1993}.

The two hyperplanes $\{z_1 = 1 \}$ and $\{z_1=-1\}$ cover all vertices.
This solution is optimal but also degenerate and not so interesting.
There are two standard ways for defining non-degenerate covers:
skew and essential covers.
In a skew cover, all normal vectors have full support (see e.g.~\cite{saks_1993,yehuda2021slicing}
and references within).
In an essential cover, all variables must appear
and no hyperplane is redundant~\cite{linial2005essential}.
Our focus is on the latter.


\begin{definition}[Linial and Radhakrishnan~\cite{linial2005essential}]
The hyperplanes $h_1, \ldots, h_k$ form an essential cover of the $n$-cube if
\begin{enumerate}[({E}1)]
    \item for every vertex $x \in \{\pm 1\}^n$, there is $i \in [k]$ so that $x \in h_i$.\label{essential-condition:cover}
    \item for every $j \in [n]$, there is $i \in [k]$ such that the normal $v_i$ of $h_i$ satisfies $v_{i,j} \neq 0$.\label{essential-condition:all-vars}
   \item for every $i \in [k]$, there is a vertex $x$ so that $x$ is covered only by $h_i$. \label{essential-condition:minimal}
 \end{enumerate}
\end{definition}

Condition~(E\ref{essential-condition:cover}) means that $h_1,\ldots,h_k$
form a cover.
Condition~(E\ref{essential-condition:all-vars}) says that every variable
``appears'' in at least one of the hyperplanes.
Condition~(E\ref{essential-condition:minimal}) means that no strict subset
of $h_1,\ldots,h_k$ form a cover (``criticality'' or ``minimality'').

Linial and Radhkrishnan proved that the smallest size $e(n)$ of an essential cover of the $n$-cube satisfies 
\begin{equation*}
    \tfrac{1}{2} (\sqrt{4n + 1} + 1) \leq e(n) \leq \ceil{\tfrac{n}{2}} .
\end{equation*}
When the hyperplanes are assumed to be {\em positive},
a sharp $n+1$ lower bound was proved by Saxton~\cite{SAXTON2013971}.
Saxton also reduced the general case to a problem 
on permanents of matrices.
We provide the first improvement over the lower bound from~\cite{linial2005essential}.

\begin{theorem*}
$e(n) \geq \Omega(n^{0.52})$.
\end{theorem*}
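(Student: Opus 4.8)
The plan is to remain within the Linial--Radhakrishnan framework but to extract more information from the exclusive vertices guaranteed by condition~(E3), combining the local combinatorics around those vertices with an anti-concentration (Littlewood--Offord) input and running a win--win argument that eliminates the configuration for which the $\sqrt n$ bound is tight.

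First I would fix the basic structure. Let $h_1,\dots,h_k$ be an essential cover with normals $v_i$ and offsets $\mu_i$, and for each $i$ pick a vertex $x^{(i)}$ covered only by $h_i$. For each coordinate $j$ in the support of $v_i$, the vertex $y$ obtained from $x^{(i)}$ by negating coordinate $j$ lies off $h_i$, hence lies on $h_{i'}$ for some $i'\neq i$; fix one such $i'$ and call it $c(i,j)$. Since $x^{(i)}\notin h_{i'}$, a one-line computation gives $|\langle x^{(i)},v_{i'}\rangle-\mu_{i'}| = 2|v_{i',j}|$. In particular $v_{i',j}\neq 0$, and --- for a fixed ordered pair $(i,i')$ --- every $j$ with $c(i,j)=i'$ yields the same value $|v_{i',j}|$, i.e.\ all such coordinates lie in a single level set of $v_{i'}$. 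Hence the support of each $v_i$ is partitioned into at most $k-1$ ``level-set pieces'' $P_{i,i'}$; every coordinate lies in at least two supports; and iterating along a walk $i_0 i_1 i_2\cdots$ in $[k]$ refines a coordinate's support membership through a chain of prescribed level sets of $v_{i_1},v_{i_2},\dots$. This kind of skeleton underlies the Linial--Radhakrishnan bound $n\leq k(k-1)$.

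Next I would isolate and then destroy the extremal regime. Equality in the $\sqrt n$ bound would force all supports to have size $\approx\sqrt n$, all pieces $P_{i,i'}$ to have size $O(1)$, and --- most importantly --- the coefficient multiset of each $v_i$ to be maximally repetitive, so that $h_i$ covers a $\Theta(1/\sqrt{s_i})$ fraction of the cube (the Littlewood--Offord maximum), which is exactly what lets $k$ hyperplanes cover all $2^n$ vertices. I would play these facts against each other. On one side, the Erd\H{o}s--Littlewood--Offord inequality gives $|h_i\cap\{\pm1\}^n|\leq O(2^n/\sqrt{s_i})$, and its refinements for coefficient sequences of bounded multiplicity (Erd\H{o}s--S\'ark\"ozy--Szemer\'edi and Hal\'asz type) improve this substantially when no value of $v_i$ is repeated too often; since $\sum_i|h_i\cap\{\pm1\}^n|\geq 2^n$, spread-out coefficients force $k$ to be large. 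On the other side, the chain structure of Step~1 decays geometrically along a walk unless many pieces $P_{i,i'}$ are large, and a large $P_{i,i'}$ means $v_{i'}$ has a large level set, i.e.\ a coefficient value of high multiplicity --- exactly when the sharpened Littlewood--Offord estimate bites. Parametrising by a threshold on support size and on coefficient multiplicity, handling small supports by the refined chain count and large supports by anti-concentration plus the global covering constraint, and optimising the threshold, should give $n\leq k^{2-\delta}$ for an explicit $\delta>0$; tracking constants would produce the exponent $0.52$.

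The hard part will be the medium-support regime with adversarial, partially repetitive coefficients. There the plain estimate $O(1/\sqrt{s_i})$ is essentially sharp --- balanced $\pm1$ coefficients realise it --- while the chain argument has no obvious loss, so neither horn of the dichotomy wins for free; making the trade-off quantitative requires a Littlewood--Offord bound genuinely sensitive to the number of distinct coefficient values \emph{and} an amortised analysis that tracks many overlapping chains of exclusive vertices simultaneously rather than one at a time. I expect this interaction, rather than any single inequality, to be where the argument and the precise exponent are decided; Saxton's reduction to permanents of $\pm1$ matrices might serve as an alternative bookkeeping tool in this step.
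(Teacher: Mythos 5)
Your plan departs entirely from the paper's route and, by your own admission, leaves the decisive case open, so it does not constitute a proof. Two concrete problems stand out. First, the level-set skeleton you build from (E3) does not bound support sizes: each piece $P_{i,i'}$ lies inside a single level set of $v_{i'}$, but that level set can be as large as the whole support, so ``at most $k-1$ pieces'' gives no control on $\|v_i\|_0$ unless you already know the level sets are small, which is circular. The quadratic bound $n\le k(k-1)$ in Linial--Radhakrishnan in fact rests on the algebraic sparsity lemma $\|v_i\|_0<2k$ (Lemma~\ref{lemma:nati}), which your argument never derives. Second, the covering count $\sum_i|h_i\cap\{\pm1\}^n|\ge 2^n$ is a weak constraint --- two hyperplanes already saturate it --- and combined with plain Littlewood--Offord and $\sum_i\|v_i\|_0\ge n$ it only forces $k=\Omega(n^{1/3})$, below even the known $\sqrt{n}$. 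Everything therefore hinges on the threshold/win--win step with refined Hal\'asz-type anti-concentration, which you explicitly flag as unresolved in exactly the medium-support, partially-repetitive regime where extremizers live; until that step is made precise, $n^{0.52}$ is an aspiration rather than a consequence.

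For contrast, the paper never counts covered vertices. It \emph{constructs} an uncovered vertex: a structural decomposition of the matrix of normals (Lemma~\ref{main_dcomposition}) is proved by combining the algebraic sparsity lemma with an iterative column-removal procedure from the slicing-problem paper, and the uncovered vertex's coordinates are then fixed block by block using an exclusive vertex for one block, Littlewood--Offord plus a ``many scales'' anti-concentration lemma for another, and --- crucially --- Bang's lemma from Tarski's plank problem followed by linear-algebraic rounding and Bernstein/antichain estimates for the last. Bang's lemma and the scales lemma are where the gain over $\sqrt{n}$ actually comes from, and neither appears in your outline.
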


Our proof-plan is straightforward.
Show that few hyperplanes must miss a vertex of the cube.
But this is false; two hyperplanes can cover the whole cube.
We must use the essential condition.
{\em How?}
Our high-level approach is inspired by ideas from~\cite{yehuda2021slicing}
where a lower bound for the slicing problem was proved.
Specifically, 
we rely on the connection to convex
geometry that was established in~\cite{yehuda2021slicing}.
The lower bound is based on Bang's solution
of Tarski's plank problem~\cite{bang1951solution,ball1991plank}.

Proving lower bounds for the slicing problem is in general harder than for  covering problems.
For example, any lower bound for the slicing problem
immediately implies a lower bound for the skew covering problem~\cite{yehuda2021slicing}.
In the slicing problem, however, we can assume that the normal vectors
are generic. This assumption is powerful, e.g., when proving anti-concentration results.
In essential covers, a major difficulty is that the hyperplanes are not generic;
a small perturbation of a cover is not a cover.
This leads to several new challenges,
and requires developing a better structural understanding
of the few but unknown hyperplanes.

\subsection{Applications}
The theorem immediately implies two new lower bounds in proof complexity,
as we briefly describe next.

The first lower bound is for
resolution over linear equations $\text{Res}(\text{lin}_{\mathbb{Q}})$.
This proof system was introduced by Raz and Tzameret \cite{raz2008resolution} as an extension of resolution with ``counting capabilities''.
Part and Tzameret~\cite{part2021resolution} studied 
tree-like proofs in this system.
They proved the following lower bound.
Let $g_1,\ldots,g_N$ be linear functions in $n$ variables over $\mathbb{Q}$ where each depends on at least $\frac{n}{2}$ variables.
Then, any tree-like $\text{Res}(\text{lin}_{\mathbb{Q}})$ derivation of
a tautology of the form $\vee_{j \in [N]} g_j = 0$ is of size $2^{\Omega(\sqrt{n})}$.
The $\Omega(\sqrt{n})$ in the exponent is in fact $\Omega(e(n))$
and follows from Linial and Radhkrishnan's
lower bound.
Our result immediately improves the lower bound
to~$2^{\Omega(n^{0.52})}$.

The second application is in the stabbing planes proof system.
This proof system was introduced by Beame et al.\
as an extension of cutting planes that is more similar to 
algorithms that are used in practice~\cite{beame2017stabbing}.
Dantchev, Galesi, Ghani, and Martin proved a lower bound of $\Omega(n)$ on the refutation-size of Tseitin formulas over the $n \times n$ grid~\cite{dantchev2021depth}.
Their lower bound is in fact of the form $\Omega(e(n^2))$.
Our result thus yields the better lower bound $\Omega(n^{1.04})$.

\section{Finding the uncovered vertex}
Assume towards a contradiction that the $k \leq \tfrac{n^{0.52}}{10}$ hyperplanes defined by $v_1, \ldots, v_k \in \R^n$ and $\mu_1, \ldots, \mu_k \in \R$ form an essential cover of the hypercube for large $n$. 
Our goal is to locate a vertex $u$ that is not covered.
This is done in three phases. In each phase we ``discover'' some more coordinates in $u$. 

\subsection{Finding structure}
Let $V$ be the $k \times n$ matrix whose rows are $v_1, \ldots, v_k$. 
The analysis is based on a structural lemma for $V$ that is stated below.
Figure~\ref{fig:galaxy} may help in understanding the lemma.
The proof of the lemma is deferred to Section~\ref{decomposing_matrix_section}.
To state the lemma, we need the following definitions.
Denote by $\|v\|_0$ the sparsity of $v \in \R^n$;
i.e., $\|v\|_0$ is the number of non-zero entries in $v$. 

\begin{definition}[\cite{yehuda2021slicing}] \label{definition:many_scales}
Let $C_0 >1$ be the constant from Claim 10 in~\cite{yehuda2021slicing}.
A vector $v \in \R^n$ has many scales if $v$ can be partitioned into $S := \floor{n^{0.001}}$ vectors $v^{(1)}, v^{(2)}, \ldots, v^{(S)}$ such that
\begin{equation*}
    \|v^{(s)}\|_2 \geq C_0 \|v^{(s+1)}\|_2
\end{equation*}
for all $s < S$.
The smallest scale of $v$ is defined to be $\|v^{(S)}\|_2$. 
\end{definition}

\begin{lemma} \label{main_dcomposition}
Let $V \in \R^{k \times n}$ be the matrix defined above
(its rows are $v_1,\ldots,v_k$ and $k \leq \frac{n^{0.52}}{10}$). 
Then, there is a partition of the rows of $V$ to four parts $[k] = K_1 \cup K_2 \cup K_3 \cup K_4$ and a partition of the columns to three parts $[n] = N_1 \cup N_2 \cup N_3$ with $|N_1| \geq \tfrac{n}{2}$ such that the following hold: 
\begin{enumerate}
\item \label{itm:sparseCol}
Every column in $N_1 \cup N_2$ has sparsity at most $n^{0.04}$.
    \item For every row $i \in K_1$, the $N_1 \cup N_2$ coordinates of $v_i$ are zero.
    \item For every row $i \in K_2$, the $N_1$ coordinates of $v_i$ are zero, and there are at least $4|K_2|^2$ non-zero elements in the $N_2$ coordinates of $v_i$.
      \item For $i \in K_3$, let $v'_i$ be the projection of $v_i$
      to the $N_1$ coordinates. 
      The vector $v'_i$ is non-zero.
      Let $\phi_i>0$ be so that $\|\phi_i v'_i\|_2 = 1$.
      For every column $j \in N_1$,
    \begin{align} \label{small_l2}
        \sum_{i \in K_3} \phi_i^2 {v'}^{2}_{ij} < n^{-0.196} . 
    \end{align}
    In particular, item~\eqref{itm:sparseCol} implies
    \begin{align} \label{small_linf}
        \sum_{i \in K_3} |\phi_i v'_{ij}| < \sqrt{n^{0.04} n^{-0.196}} \leq n^{-0.078} .
    \end{align}
    \item For every row $i \in K_4$, the projection of $v_i$ to the $N_1 \cup N_2$ coordinates has many scales, its smallest scale is non-zero, and the position of the smallest scale contains $N_1$.
  \end{enumerate}
\end{lemma}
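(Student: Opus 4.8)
The plan is to produce the two partitions by an iterative ``cleaning'' process. Throughout we keep a large candidate set $N_1$ of columns (which only ever shrinks), an auxiliary ``buffer'' $N_2$, and the discard set $N_3 := [n]\setminus(N_1\cup N_2)$, and we classify the rows one at a time into $K_1,\dots,K_4$. The one invariant we never break is $|N_1|\ge n/2$; the heart of the proof is showing that every column-move we are forced to make can be charged so that this invariant survives. The engine of the classification is the dichotomy behind Definition~\ref{definition:many_scales}, i.e.\ Claim~10 of~\cite{yehuda2021slicing}: a vector either has many scales, or almost all of its $\ell_2$-energy sits on a small set of coordinates, a ``concentration core''.

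\textbf{Cleaning the columns; extracting $K_1$, $K_2$, $K_4$.} First move every column of sparsity exceeding $n^{0.04}$ into $N_3$; bounding the number of such columns (this is the step that uses the hypothesis on $V$) secures item~\eqref{itm:sparseCol} once and for all, since later moves only take columns out of $N_1\cup N_2$. Now process the rows. Given an unclassified $v_i$, look at its restriction $w_i$ to the current $N_1\cup N_2$ and apply the dichotomy. If $w_i$ has many scales, move the columns of all but its smallest scale from $N_1$ to $N_2$; afterwards $N_1$ is contained in the position of the smallest scale of $w_i$, and we put $i$ in $K_4$. If instead $w_i$ is concentrated on a core $A$, move $A\cap N_1$ into $N_2$ and re-examine $v_i$; since the support of $v_i$ inside $N_1$ strictly dropped, this sub-loop halts --- either with $v_i$ vanishing on $N_1\cup N_2$ (put $i$ in $K_1$), or vanishing on $N_1$ but not on $N_2$ (put $i$ in $K_2$ if it then has at least $4|K_2|^2$ nonzeros in $N_2$, else sweep its remaining active columns into $N_3$ and put $i$ in $K_1$), or with $w_i$ small and spread out on $N_1$ (hold $i$ as a tentative member of $K_3$). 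Since $4|K_2|^2$ refers to the \emph{final} $K_2$, this last clause must be iterated to a fixpoint: whenever $N_2$ later shrinks and some $i\in K_2$ drops below the threshold, demote it to $K_1$, sweep its active columns into $N_3$, and repeat; this only decreases $|K_2|$, so it terminates, and one argues $|K_2|$ ends up small enough that $4|K_2|^2\le|N_2|$.

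\textbf{Finishing $K_3$ and the energy bounds.} After the loop, the columns still in $N_1$ are touched only by the tentative $K_3$-rows, on which the restrictions $v'_i$ are nonzero; normalize $\phi_i$ so that $\|\phi_i v'_i\|_2=1$. Since $\sum_{j\in N_1}\sum_{i\in K_3}\phi_i^2{v'}^2_{ij}=|K_3|\le k$, at most $k\,n^{0.196}=O(n^{0.716})$ columns $j$ can have column-energy $\sum_{i\in K_3}\phi_i^2{v'}^2_{ij}\ge n^{-0.196}$; move those into $N_3$. The columns that survive in $N_1$ then satisfy~\eqref{small_l2}, and~\eqref{small_linf} follows by Cauchy--Schwarz using item~\eqref{itm:sparseCol}. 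Finally declare $N_1$ to be the surviving columns, $N_2$ the buffer, $N_3$ the remainder; items~(1)--(5) are read off from how the rows were classified, and $|N_1|\ge n/2$ from the bookkeeping below.

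\textbf{The main obstacle.} Essentially the entire content of the lemma is this bookkeeping. One must control, simultaneously: (a) the heavy columns discarded at the start; (b) every ``large scale'' and ``concentration core'' absorbed into $N_2$ during the $K_3$- and $K_4$-steps --- here the geometric decay in Definition~\ref{definition:many_scales}, the sparsity bound $n^{0.04}$, and $k\le n^{0.52}/10$ must be combined to show each row contributes few new columns and that $|N_2|<n/2$; (c) the columns swept into $N_3$ while building $K_1$ and pruning $K_2$, which needs an a priori bound on $|K_2|$; and (d) the $O(n^{0.716})$ columns removed for the $K_3$-energy estimate. These interact, and, worse, the type of $w_i$ can change as $N_1$ shrinks, so the order in which rows and pruning steps are interleaved matters; once the charging scheme is set up correctly the individual estimates are routine.
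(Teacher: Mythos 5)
Your plan has the right silhouette (dense columns out first, classify rows via the many-scales dichotomy, then prune $N_1$ for the $K_3$ energy bound), but the part you wave off as ``routine bookkeeping once the charging scheme is set up'' is exactly where the lemma lives, and your version of the charging scheme has concrete gaps. Most importantly, your $K_4$-step moves \emph{all} of the non-smallest-scale columns of $w_i$ from $N_1$ into $N_2$, with no control on how many columns that is: a priori each row contributes up to $\|v_i\|_0 < 2k$ columns, so $\Theta(k^2)$ in total, which already exceeds $n$ under the hypothesis $k\le n^{0.52}/10$. The paper avoids this by not working row-by-row at all; it uses Lemma~\ref{lemma:decomposing} (Lemma~13 of~\cite{yehuda2021slicing}, reproved in Section~\ref{decomposing_matrix_section}), which only ever moves columns whose \emph{normalized column mass} exceeds $n^{-0.1961}$, and which tracks ``drops'' --- when a row's surviving mass falls below $\tau$ it is re-normalized and gains a scale, and after $S$ drops it is retired to the many-scales part. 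That mass-based accounting is what gives the budget $|M_2|\le n^{0.7171}$ per application and makes the column losses bounded; your sketch has no analogue of it. Relatedly, your terminal $K_3$-pruning (``$\sum_j\sum_i\phi_i^2{v'}^2_{ij}=|K_3|\le k$, so at most $kn^{0.196}$ columns can be heavy; discard them'') ignores that discarding a column shrinks $\|v'_i\|_2$ and hence changes the normalizers $\phi_i$, so previously light columns can become heavy. A one-shot count does not terminate; the drop/renormalize loop in Lemma~\ref{lemma:decomposing} is precisely the mechanism that closes this.

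Two smaller but real issues. (i) You gesture at ``the hypothesis on $V$'' to bound the dense columns, but the specific fact needed is Lemma~\ref{lemma:nati} from~\cite{linial2005essential}: every row of an essential cover satisfies $\|v_i\|_0<2k$, so $V$ has $\le 2k^2\le n^{1.04}/50$ nonzeros and hence $\le n/50$ columns of sparsity above $n^{0.04}$. Without invoking this polynomial-method lemma, the first step is unproved. (ii) Your $K_2$ construction (accept if $\ge 4|K_2|^2$ nonzeros in $N_2$, otherwise demote and sweep into $N_3$, iterate to a fixpoint) is order-dependent and gives no bound on the columns swept into $N_3$. The paper instead produces $K_2$ by iterating Lemma~\ref{lemma:decomposing} over a nested sequence $V^{(0)}\supseteq\cdots\supseteq V^{(T)}$ and branching on two conditions, \eqref{condition1:large_k} (many rows die on $M_1$) and \eqref{condition2: sparse_V} (a dead row has few nonzeros in $M_2$): the first lets the columns lost be charged against rows lost via a subadditivity inequality, and the second yields $K_2=Z_T$ with the $4|K_2|^2$ sparsity threshold automatically violated. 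Recovering that case analysis and the charging inequality $\sum_{t\in\tau} n_t^{0.332}\le k$ is the actual content you would need to supply.
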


\begin{figure}[htp]
    \centering
    \includegraphics[width=11cm]{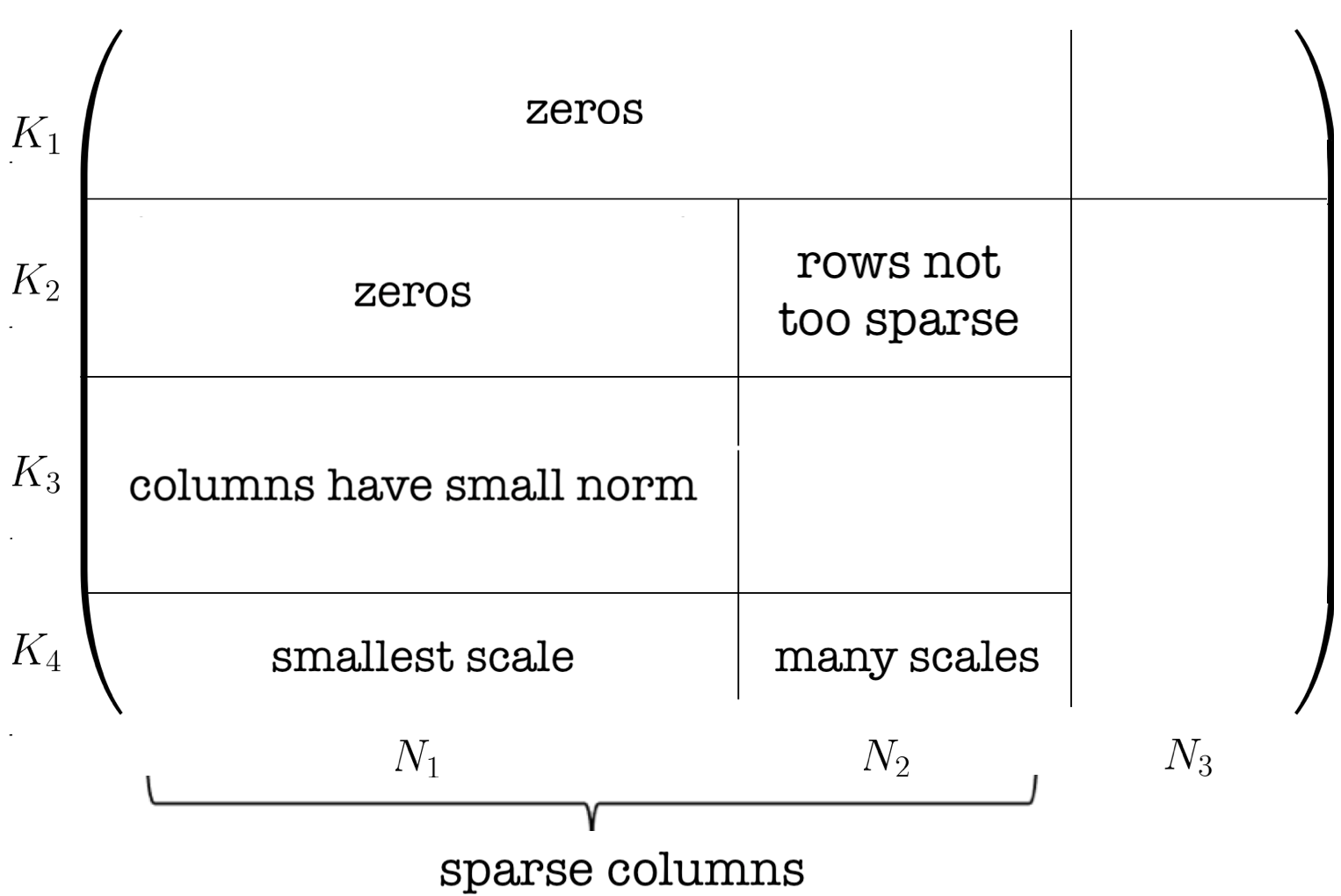}
    \caption{The decomposition of $V$.}
    \label{fig:galaxy}
\end{figure}

\subsection{Phase I}
In this phase, we fix the $N_3$ coordinates in $u$.
We know that $|N_3| < n$.
Condition (E\ref{essential-condition:all-vars}) implies that $K_1 \neq [k]$. 
Condition (E\ref{essential-condition:minimal}) implies
that there is a vertex $x$ that is not covered by the hyperplanes in $K_1$.
For every $j \in N_3$ set $u_j = x_j$.
The rest of the entries in $u$ shall be determined in later phases.
Denote the part of $u$ that was just fixed by $u^{(I)}$.
The $u^{(I)}$ part of $u$ deals with all hyperplanes in $K_1$.

\subsection{Phase II}
In this phase, we fix the $N_2$ coordinates in $u$.
This is done with the aid of randomness. 
The randomness simultaneously deals with rows in $K_2$ and in $K_4$.
We denote by $v'_i$ the projection of $v_i$ to the $N_1$ coordinates,
by $v''_i$ the projection to the $N_2$ coordinates,
and by $v^{(I)}_i$ the projection to coordinates in $N_3$.

\begin{claim} \label{claim:n2}
There is $x'' \in \{\pm 1\}^{N_2}$ so that
\begin{enumerate}
    \item For every $i \in K_2$, 
    \begin{equation*}
        \langle x'', v_i'' \rangle + \langle u^{(I)}, v_i^{(I)} \rangle \neq \mu_i .
    \end{equation*}
    \item For every $i \in K_4$ and $x' \in \{\pm 1\}^{N_1}$, 
    \begin{equation*}
        |\langle x'', v_i'' \rangle + \langle u^{(I)}, v_i^{(I)} \rangle - \mu_i| > |\langle x', v_i' \rangle|  .
    \end{equation*}
\end{enumerate}
\end{claim}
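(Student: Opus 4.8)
The plan is to draw $x'' \in \{\pm 1\}^{N_2}$ uniformly at random and prove that with positive probability both items hold; each item is controlled by its own anti-concentration estimate, and a union bound over the at most $k \le \frac{n^{0.52}}{10}$ rows in $K_2 \cup K_4$ closes the argument. Throughout write $c_i := \langle u^{(I)}, v_i^{(I)} \rangle$, a fixed real number once Phase~I is done, and recall that $v_i' = 0$ for $i \in K_2$ by item~3 of Lemma~\ref{main_dcomposition}, so that for those rows only item~1 is relevant.

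For item~1, fix $i \in K_2$. By item~3 of Lemma~\ref{main_dcomposition} the vector $v_i''$ has at least $4|K_2|^2$ nonzero entries. The Erd\H{o}s--Littlewood--Offord anti-concentration inequality, applied to the random sum $\langle x'', v_i'' \rangle$ (only the nonzero coordinates of $v_i''$ matter), gives
\begin{equation*}
\Pr_{x''}\bigl[\langle x'', v_i'' \rangle = \mu_i - c_i\bigr] \ \le\ \binom{m}{\floor{m/2}}2^{-m} \ \le\ \frac{1}{\sqrt{m}} \ \le\ \frac{1}{2|K_2|},
\end{equation*}
where $m := \|v_i''\|_0 \ge 4|K_2|^2$. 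Summing over the $|K_2|$ such rows, item~1 fails with probability at most $\tfrac12$.

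For item~2, fix $i \in K_4$. Since $|\langle x', v_i' \rangle| \le \|v_i'\|_1$ for every $x' \in \{\pm 1\}^{N_1}$, it is enough to bound the probability that $\langle x'', v_i'' \rangle$ lands in the interval $I_i$ of radius $\|v_i'\|_1$ centered at $\mu_i - c_i$ — a bound that is automatically uniform over all $x'$ at once. By item~5 of Lemma~\ref{main_dcomposition}, the projection $w_i$ of $v_i$ to $N_1 \cup N_2$ has many scales $w_i^{(1)}, \ldots, w_i^{(S)}$ with $\|w_i^{(S)}\|_2 \neq 0$, and the block $w_i^{(S)}$ carries all of $N_1$; consequently the larger blocks $w_i^{(1)}, \ldots, w_i^{(S-1)}$ are supported inside $N_2$ (so they are entirely seen by $x''$), and $v_i'$ is a subvector of $w_i^{(S)}$, whence $\|v_i'\|_2 \le \|w_i^{(S)}\|_2$ and $\|v_i'\|_1 \le \|w_i^{(S)}\|_1$. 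Thus $v_i''$ itself has many scales (the blocks $w_i^{(1)}, \ldots, w_i^{(S-1)}$ followed by the small block $w_i^{(S)}|_{N_2}$), and Claim~10 of~\cite{yehuda2021slicing} — the anti-concentration estimate for many-scales vectors, with the same constant $C_0$ — applies to $\langle x'', v_i'' \rangle$ and bounds $\Pr_{x''}[\langle x'', v_i'' \rangle \in I_i]$ by a quantity that is super-polynomially small in $n$, of order $2^{-\Omega(n^{0.001})}$, once one checks that the radius $\|v_i'\|_1$ falls in the range where that estimate is strong. Verifying this last point is where items~1 and~5 of Lemma~\ref{main_dcomposition} are used: $v_i'$ lies inside the smallest-scale block while the largest scale obeys $\|w_i^{(1)}\|_2 \ge C_0^{S-1}\|w_i^{(S)}\|_2 \ge C_0^{S-1}\|v_i'\|_2$, and since $S = \floor{n^{0.001}}$ the factor $C_0^{S-1}$ is super-polynomial and dominates the loss $\|v_i'\|_1 \le \sqrt{|N_1|}\,\|v_i'\|_2 \le \sqrt{n}\,\|v_i'\|_2$ incurred when passing from $\ell_2$ to $\ell_1$.

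Summing the $K_4$ estimate over the at most $k \le \frac{n^{0.52}}{10}$ rows contributes only $k \cdot 2^{-\Omega(n^{0.001})} = o(1)$, so the total probability of any bad event is at most $\tfrac12 + o(1) < 1$ for large $n$; hence some $x''$ satisfies both items, which is the claim. The single real obstacle I anticipate is the precise invocation of Claim~10 of~\cite{yehuda2021slicing}: one has to match the interval radius $\|v_i'\|_1$ to the regime in which many-scales anti-concentration is quantitatively strong, and this is exactly what the placement of $N_1$ in the smallest scale, the column-sparsity bound (item~1), and the choice $S = \floor{n^{0.001}}$ were built to guarantee. A minor secondary point is that only $x''$, not $x'$, is randomized, which is legitimate precisely because every large scale of $w_i$ lives in $N_2$.
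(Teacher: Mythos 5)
Your treatment of $K_2$ is exactly the paper's: draw $x''$ uniformly at random, invoke Erd\H{o}s--Littlewood--Offord with $\|v_i''\|_0 \ge 4|K_2|^2$, union bound over $|K_2|$ rows. Your high-level plan for $K_4$ is also the same (reduce the uniform-over-$x'$ condition to a single interval event and hit it with many-scales anti-concentration), but the specific way you set up the scales has a genuine gap.

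You propose to view $v_i''$ itself as a many-scales vector, with blocks $w_i^{(1)},\ldots,w_i^{(S-1)}$ followed by $\beta := w_i^{(S)}|_{N_2}$, and to apply the anti-concentration lemma to $\langle x'', v_i''\rangle$ with interval radius $\|v_i'\|_1 \le \sqrt{n}\|v_i'\|_2$. The problem is that Lemma~\ref{lemma12_previous} controls $\Pr[|\langle x,v\rangle - a|\le b\delta]$ where $\delta$ is the \emph{smallest} scale and the bound degrades as $\exp(C_1\log b)$. With your decomposition the smallest scale is $\|\beta\|_2$, and nothing in Lemma~\ref{main_dcomposition} lower-bounds $\|\beta\|_2$: item~5 only guarantees $w_i^{(S)} \ne 0$, and since the row $i\in K_4$ has $v_i' \ne 0$, we could perfectly well have $\beta = 0$ or $\|\beta\|_2$ exponentially small compared to $\|v_i'\|_2$. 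In those cases $b = \|v_i'\|_1/\|\beta\|_2$ is uncontrolled or infinite, and the lemma gives nothing. Your closing attempt to rescue this by comparing the radius to the \emph{largest} scale, via $\|w_i^{(1)}\|_2 \ge C_0^{S-1}\|v_i'\|_2$, does not address the issue: a radius much smaller than the largest scale is not sufficient for anti-concentration. A single-spike example ($v_i''$ concentrated on one coordinate) shows that $\langle x'', v_i''\rangle$ can sit with probability $\tfrac12$ inside an interval of radius far below $\|w_i^{(1)}\|_2$ if the smaller scales vanish; the whole point of the many-scales mechanism is that the \emph{small} scales do the smoothing.

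The paper sidesteps this by an extra conditioning you omitted. Split $x''$ into $x_\alpha$ (over the support of the top $S-1$ scales) and $x_\beta$ (over the support of $\beta$). First fix $x_\beta$ arbitrarily; then the residual random quantity is $\langle x_\alpha, \alpha\rangle$ where $\alpha = (w_i^{(1)},\ldots,w_i^{(S-1)})$ has $S-1$ scales and smallest scale $\delta := \|w_i^{(S-1)}\|_2$. Now $\delta > C_0\|w_i^{(S)}\|_2 \ge C_0\|v_i'\|_2$, so the radius $\sqrt{n}\|v_i'\|_2 < \sqrt{n}\,\delta$, giving $b = \sqrt{n}$ and hence a bound $C_1\exp\bigl(-\tfrac{S-1}{C_1} + C_1\log n\bigr)$ which is uniform in $x_\beta$ and therefore survives averaging over $x_\beta$. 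This is the step your proof needs and does not have. (A small secondary point: the anti-concentration statement used here is Lemma~\ref{lemma12_previous}, not Claim~10 of \cite{yehuda2021slicing}; the latter is only referenced to define the constant $C_0$.)
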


Choose the $N_2$ coordinates of $u$ to be $x''$.
Denote this part of $u$ by $u^{(II)}$.
The first condition implies that $u^{(II)}$ deals with all rows in $K_2$.
The second condition implies that it deals with all rows in $K_4$,
regardless of the future choice of the $N_1$ coordinates in $u$.

In order to prove the claim, we need the following two anti-concentration results.
The first is known as the Littlewood-Offord lemma~\cite{littlewood1939number}
and was proved by Erd\H os~\cite{erdos1945lemma}.
The second appears in~\cite{yehuda2021slicing}.
We denote by $x \sim \{\pm 1\}^n$ a uniformly random vertex
of the cube.

\begin{claim}[\cite{littlewood1939number,erdos1945lemma}] \label{anti-concentration:sparse-vectors}
For all $v \in \R^n$ so that $\|v\|_0>0$ and $a \in \R$, 
$$\Pr_{x \sim \{\pm 1\}^{n}} [\langle x, v \rangle = a] \leq \frac{1}{\sqrt{\|v\|_0}}.$$
\end{claim}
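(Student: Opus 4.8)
The plan is to recover the classical proof of Erdős, which turns the event into an antichain-counting problem and applies Sperner's theorem.

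First I would make two harmless reductions. Write $m = \|v\|_0$ and let $T \subseteq [n]$ be the support of $v$. The coordinates $x_j$ with $j \notin T$ do not affect $\langle x, v\rangle$, so conditioning on them shows that $\Pr_{x \sim \{\pm1\}^n}[\langle x,v\rangle = a]$ equals the analogous probability for the restriction of $v$ to $T$, a vector in $\R^m$ all of whose entries are nonzero. Next, flipping the signs of the coordinates $x_j$ for $j$ with $v_j < 0$ is a measure-preserving bijection of $\{\pm1\}^m$, so I may further assume that every entry of $v$ is strictly positive. It therefore suffices to bound $\Pr_{x \sim \{\pm1\}^m}[\langle x,v\rangle = a]$ when $v \in \R^m$ has all entries positive.

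Second I would translate the event into a subset-sum statement. For $x \in \{\pm1\}^m$ put $S(x) = \{j : x_j = 1\}$; then $\langle x, v\rangle = 2\sum_{j \in S(x)} v_j - \sum_{j} v_j$, so $\langle x, v\rangle = a$ exactly when $\sum_{j \in S(x)} v_j = b$, where $b = (a + \sum_j v_j)/2$. Consequently
\[
  \Pr_{x \sim \{\pm1\}^m}[\langle x,v\rangle = a] = \frac{|\mathcal{A}|}{2^m}, \qquad \mathcal{A} := \Bigl\{ S \subseteq [m] : \sum_{j \in S} v_j = b \Bigr\} .
\]
The key point is that $\mathcal{A}$ is an antichain in the Boolean lattice $2^{[m]}$: if $S \subsetneq S'$ with $S, S' \in \mathcal{A}$, then $\sum_{j \in S' \setminus S} v_j = b - b = 0$, which is impossible since $S' \setminus S \neq \emptyset$ and all $v_j > 0$. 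By Sperner's theorem, $|\mathcal{A}| \le \binom{m}{\floor{m/2}}$.

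Finally I would close with the standard binomial estimate $\binom{m}{\floor{m/2}} \le 2^m/\sqrt{m}$, valid for every integer $m \ge 1$ (verified directly for small $m$, and via a Wallis-type product estimate or a short induction for large $m$). Combining the displays gives $\Pr_{x \sim \{\pm1\}^n}[\langle x,v\rangle = a] \le \binom{m}{\floor{m/2}}/2^m \le 1/\sqrt{m} = 1/\sqrt{\|v\|_0}$, as claimed. I do not expect a genuine obstacle here — this is a classical lemma and the proof is short; the only real content is Sperner's theorem, and the one mild nuisance is checking the inequality $\binom{m}{\floor{m/2}} \le 2^m/\sqrt{m}$ uniformly in $m$ (it is tight up to a constant, since the left-hand side is $\Theta(2^m/\sqrt m)$, so the bound in the claim is essentially sharp).
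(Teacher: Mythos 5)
Your proof is correct. The paper does not prove this claim at all --- it is quoted as a known result of Littlewood--Offord and Erd\H{o}s --- and what you have written is exactly the classical Erd\H{o}s argument (reduce to positive coefficients, observe that the solution sets form an antichain, apply Sperner, bound the central binomial coefficient). The one point you flagged, that $\binom{m}{\floor{m/2}} \leq 2^m/\sqrt{m}$ for every $m \geq 1$, does need to hold with constant exactly $1$ since the claim has no slack in the constant; it is true (e.g.\ from $\binom{2k}{k} \leq 4^k/\sqrt{\pi k}$ and the analogous odd case), so there is no gap.
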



\begin{lemma}[\cite{yehuda2021slicing}]
\label{lemma12_previous}
There is a constant $C_1 > 1$ so that the following holds. 
If $v \in \R^n$ has many scales and its smallest scale is $\delta > 0$, then for every $a \in \R$ and $b \geq 2$ we have
\begin{equation*}
    \Pr_{x \sim \{\pm 1\}^n}[|\langle x,v \rangle - a| \leq b \delta] < C_1 \exp\Big(-\frac{S}{C_1} + C_1\log(b)\Big) .
\end{equation*}
\end{lemma}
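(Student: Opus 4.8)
The plan is to prove this Littlewood--Offord--type bound by a Fourier (Esseen) argument that converts the geometric decay of the scales into an exponential gain in $S$, at the cost of a polynomial loss in $b$. \emph{Setup.} Write $v = v^{(1)}+\dots+v^{(S)}$ on the disjoint coordinate blocks $B_1,\dots,B_S$ supplied by Definition~\ref{definition:many_scales}, put $\sigma_s := \|v^{(s)}\|_2$ (so $\sigma_1 \ge C_0\sigma_2 \ge \dots \ge C_0^{\,S-1}\sigma_S$ with $\sigma_S = \delta$), and set $Y := \langle x,v\rangle$. Because the blocks are disjoint, $Y = \sum_{s=1}^S Z_s$ with $Z_s := \langle x|_{B_s}, v^{(s)}\rangle$ \emph{independent}, each of mean $0$ and variance $\sigma_s^2$. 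We must bound the Lévy concentration function $Q(Y;b\delta) := \sup_{a\in\R}\Pr_{x}[\,|Y-a|\le b\delta\,]$. First dispose of large $b$: once $C_1$ is fixed large enough (in terms of $C_0$ only), whenever $C_1\log b \ge S/C_1$ the claimed bound is at least $C_1 \ge 1$ and there is nothing to prove; so we may assume $\log b = O(S)$, and in particular, using $\sigma_1 \ge C_0^{\,S-1}\delta$, the Fourier radius $1/(b\delta)$ used below is at least $1/\sigma_1$.

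\emph{Fourier reduction.} By Esseen's concentration inequality there is an absolute constant $C_{\mathrm{Es}}$ with $Q(Y;b\delta) \le C_{\mathrm{Es}}\,(b\delta)\int_{|t|\le 1/(b\delta)} |\varphi(t)|\,dt$, where $\varphi(t) = \E e^{itY} = \prod_{j=1}^n \cos(t v_j) = \prod_{s=1}^S \varphi_s(t)$ and $\varphi_s(t) = \prod_{j\in B_s}\cos(t v_j)$. From $|\cos\theta| \le \exp(-\tfrac12\sin^2\theta)$ we get the pointwise bounds $|\varphi_s(t)| \le \exp(-\tfrac12\sum_{j\in B_s}\sin^2(t v_j))$ and hence $|\varphi(t)| \le \exp(-\tfrac12\sum_{j=1}^n\sin^2(t v_j))$. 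It therefore suffices to show that the average of $\exp(-\tfrac12\sum_j\sin^2(tv_j))$ over $t\in[-1/(b\delta),\,1/(b\delta)]$ is at most $\mathrm{poly}(b)\cdot e^{-\Omega(S)}$.

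\emph{Using the scales.} Call a frequency $t$ \emph{good for scale $s$} if $\sum_{j\in B_s}\sin^2(t v_j)\ge c_0$ for a suitable absolute constant $c_0>0$. The two ingredients are: (i) a block of $\ell_2$-norm $\sigma_s$ cannot be bad for most frequencies in the window --- the natural frequency range $|t|\asymp 1/\sigma_s$ of block $s$ sits inside $[0,1/(b\delta)]$ exactly for the $\approx S-\log_{C_0}b$ largest scales, and on that range a single coordinate of size $\ge\sigma_s$ already forces $\sin^2(t v_j)\ge c_0$ for a constant fraction of $t$ (a block spread over several coordinates does so even more robustly), so the density of $t$ that are bad for scale $s$ is at most some $\eta<1$; (ii) because the scales are geometrically separated, the events ``bad for $s$'' are quasi-independent across $s$, so a Chernoff/union-bound count over the $\approx S-\log_{C_0}b$ relevant scales shows that all but an $e^{-\Omega(S)}$-fraction of the window is good for $\Omega(S)$ scales simultaneously --- where $\sum_j\sin^2(tv_j)\ge c_0\cdot\Omega(S)$, making the integrand $\le e^{-\Omega(S)}$ --- while on the remaining $e^{-\Omega(S)}$-fraction we bound the integrand by $1$. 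Adding the two contributions gives an average of $e^{-\Omega(S)}$, and tracking the constants (the only place $b$ enters is the count ``$S-\log_{C_0}b$ relevant scales'') yields exactly the form $C_1\exp(-S/C_1 + C_1\log b)$.

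\emph{Main obstacle.} The crux is steps (i)--(ii): bounding the measure of frequencies that are simultaneously bad for many scales. Blocks concentrated on one large coordinate give no Gaussian-type decay of $|\varphi_s|$, and a priori the bad sets of distinct scales (neighbourhoods of the multiples of $\pi/v_j$) could align to produce additive structure on which $|\varphi(t)|\approx 1$ across a non-negligible part of the window. Precluding this is precisely what fixes the quantitative choice of $C_0$ in Definition~\ref{definition:many_scales} (the constant imported from Claim~10 of~\cite{yehuda2021slicing}): a large enough geometric separation of scales keeps their bad sets disentangled, so the counting argument closes. This multiscale counting is the technical heart of the statement; as it is exactly the lemma established in~\cite{yehuda2021slicing}, we invoke it here as a black box rather than reprove it.
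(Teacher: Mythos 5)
The paper does not prove this lemma at all: it is quoted verbatim from \cite{yehuda2021slicing}, and your proposal ultimately does the same, invoking the multiscale counting as a black box. So in terms of what is actually relied upon, you and the paper agree, and citing the external result is acceptable. The sketch you wrap around the citation, however, follows a genuinely different route from the cited source. You go through Esseen's inequality and try to show that the characteristic function $\prod_j \cos(t v_j)$ is small for most frequencies in the window $|t| \leq 1/(b\delta)$, which requires controlling the overlap of the ``bad'' frequency sets of the $S$ scales; as you concede, that overlap control is the entire content of the lemma and you do not supply it --- your step (ii), the ``quasi-independence'' of badness across scales, is an assertion, not a proof. The argument in \cite{yehuda2021slicing} avoids this issue entirely by working directly in probability space: it conditions scale by scale and applies a single-scale anti-concentration claim (Claim~10 there, which is where the constant $C_0$ in Definition~\ref{definition:many_scales} originates) to each of the roughly $S - \log_{C_0} b$ scales whose $\ell_2$-norm exceeds the window width; independence of the disjoint blocks then multiplies the constant per-scale losses into the $e^{-\Omega(S)}$ decay, with the $C_1 \log b$ term accounting for the scales that are too small to help. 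The Fourier route could plausibly be pushed through (geometric separation does make the bad sets roughly equidistribute relative to one another), but as written it is a plan rather than a proof; your preliminary reductions (disposing of large $b$, the block decomposition, the bound $|\cos\theta| \leq \exp(-\tfrac12\sin^2\theta)$) are all fine.
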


\begin{proof}[Proof of Claim \ref{claim:n2}]
Pick $x''$ uniformly at random. 
First, consider $i \in K_2$. 
By Claim~\ref{anti-concentration:sparse-vectors},
because $\|v''_i\|_0 \geq 4|K_2|^2$,
\begin{equation*}
    \Pr_{x''}[ \langle x'', v_i'' \rangle + \langle u^{(I)}, v_i^{(I)} \rangle = \mu_i]  \leq \frac{1}{2 |K_2|} .
\end{equation*}

Second, consider $i \in K_4$.
Row $v_i$ has many scales and $v'_i$ is part of its smallest scale. 
Let $\alpha$ be the vector consisting of the largest $S-1$ scales of $v_i$. 
The vector of the smallest scale in $v_i$ has two parts: $v'_i$ and the part outside the $N_1$ columns which we denote by $\beta$. 
Partition $x''$ to $x_\alpha$, $x_\beta$ accordingly. 
Let $\delta>0$ be the smallest scale of $\alpha$. 
By Cauchy-Schwartz, for all $x' \in \{\pm 1\}^{N_1}$,
\begin{equation*}
    |\langle x', v'_i \rangle| \leq \sqrt{n} \|v'_i\|_2 < \sqrt{n} \delta .
\end{equation*}
By Lemma \ref{lemma12_previous}, conditioned on the value of $x_\beta$, 
\begin{align*}
     \Pr_{x_\alpha}[|\langle x'', v''_i \rangle + \langle u^{(I)}, v_i^{(I)} & \rangle - \mu_i| \leq
    \sqrt{n} \delta ] \leq C_1 \exp \Big(-\frac{S-1}{C_1} + C_1 \log n\Big) .
\end{align*}
The same bound holds when we average over $x_\beta$ as well.

Finally, the union bound over $K_2 \cup K_4$ completes the proof.
\end{proof}

\subsection{Phase III}
It remains to choose the $N_1$ coordinates in $u$.
Let $\tilde u \in \{\pm 1\}^{N_2 \cup N_3}$ denote the part of $u$ that was already fixed
(it is comprised of $u^{(I)}$ and $u^{(II)}$).
Let $\tilde v_i$ be projection of $v_i$
to the $N_2 \cup N_3$ coordinates.

\begin{claim}
\label{clm:finalone}
There is $x' \in \{\pm 1\}^{N_1}$ so that for all $i \in K_3$,
$$\langle x' , v'_i \rangle + \langle \tilde u, \tilde v_i \rangle \neq \mu_i .$$
\end{claim}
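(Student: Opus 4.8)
The plan is to use a probabilistic argument on $x' \sim \{\pm 1\}^{N_1}$, but the obstacle is that each individual constraint $\langle x', v_i'\rangle = \mu_i - \langle \tilde u, \tilde v_i\rangle$ is a single hyperplane slicing $\{\pm 1\}^{N_1}$, and such a hyperplane can contain up to half of the vertices; a naive union bound over $i \in K_3$ fails completely. This is exactly the place where we must exploit the essential-cover structure together with the smallness estimate~\eqref{small_l2}, which says the vectors $\phi_i v_i'$ (the $N_1$-parts of the rows in $K_3$, normalized to unit length) are ``spread out'': no column carries more than an $n^{-0.196}$ fraction of the total squared mass. I expect this is the main difficulty, and that it is resolved by a geometric/measure-theoretic argument rather than a direct Littlewood--Offord bound.

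The approach I would take: associate to each $i \in K_3$ the affine hyperplane $h_i' = \{z \in \R^{N_1} : \langle z, \phi_i v_i'\rangle = \phi_i(\mu_i - \langle \tilde u, \tilde v_i\rangle)\}$, with \emph{unit} normal $\phi_i v_i'$. We must find a vertex of the cube $\{\pm 1\}^{N_1} \subseteq \R^{N_1}$ avoiding all of them. Enclose the cube in its circumscribed ball of radius $\sqrt{|N_1|} \le \sqrt n$, and replace each hyperplane by the ``plank'' (slab) of width $2w$ around it for a suitable $w>0$; since the cube vertices all lie on the sphere of radius $\sqrt{|N_1|}$, any vertex not covered by $h_i'$ exactly is in fact at distance $\ge$ (some gap) from $h_i'$, but to be safe we want to show the planks of half-width $w$ do not cover the ball. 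By Bang's theorem (the solution of Tarski's plank problem~\cite{bang1951solution,ball1991plank}), a collection of planks of half-widths $w_1,\dots,w_{|K_3|}$ covering a ball of radius $r$ must satisfy $\sum_i w_i \ge r$. Here all normals have norm $1$ and $|K_3| \le k \le \tfrac{n^{0.52}}{10}$, so taking each half-width $w$ slightly larger than $\sqrt n / |K_3| \ge 10 n^{-0.02}$ would in principle violate Bang, leaving a point uncovered. The remaining work is to turn ``point of the ball not in any plank'' into ``\emph{vertex} of the cube not on any $h_i'$'': one rounds the uncovered point to a nearby cube vertex and controls, for each $i$, the change in $\langle x', \phi_i v_i'\rangle$ using the spreadness bound~\eqref{small_linf}, $\sum_{i\in K_3}|\phi_i v'_{ij}| < n^{-0.078}$ for every $j \in N_1$, so that rounding one coordinate perturbs all the inner products by a total of at most $n^{-0.078}$ — far less than the plank half-width — hence no vertex lands exactly on $h_i'$.

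Concretely, the steps in order: (i) normalize to unit normals and pass to the circumscribing ball of $\{\pm 1\}^{N_1}$; (ii) fix a plank half-width $w$ with $n^{-0.078} \ll w < \sqrt{|N_1|}/|K_3|$, which is feasible since $|K_3| \le n^{0.52}/10$ forces $\sqrt{|N_1|}/|K_3| \ge \tfrac{1}{10} n^{0.5 - 0.52} \cdot \sqrt{2}$-ish, comfortably larger than $n^{-0.078}$; (iii) invoke Bang's theorem to conclude $\sum_{i} w < r$ would be violated, so there is a point $p$ in the ball lying in no plank, i.e.\ $|\langle p, \phi_i v_i'\rangle - \phi_i(\mu_i - \langle \tilde u,\tilde v_i\rangle)| > w$ for all $i \in K_3$; (iv) round $p$ coordinate-by-coordinate to a vertex $x' \in \{\pm 1\}^{N_1}$, using $\|\phi_i v_i'\|_\infty \le \|\phi_i v_i'\|_2 \le 1$ per coordinate but more importantly the column-sum bound~\eqref{small_linf} to see that the \emph{total} distortion $\sum_{i}|\langle x' - p, \phi_i v_i'\rangle|$ summed over $i$, or the per-$i$ distortion $|\langle x' - p, \phi_i v_i'\rangle|$, stays below $w$; (v) conclude $\langle x', \phi_i v_i'\rangle \ne \phi_i(\mu_i - \langle \tilde u, \tilde v_i\rangle)$, hence $\langle x', v_i'\rangle + \langle \tilde u, \tilde v_i\rangle \ne \mu_i$, for every $i \in K_3$, which is the claim. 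The main obstacle, as noted, is step (iv): controlling the rounding error simultaneously for all the (unknown) hyperplanes, and it is precisely the $\ell_\infty$-type spreadness conclusion~\eqref{small_linf} of Lemma~\ref{main_dcomposition} that makes it work; one should be a little careful that $|N_1| \ge n/2$ is used both to make the ball large (lower-bounding $r$) and is consistent with the exponent arithmetic.
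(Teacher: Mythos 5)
Your high-level idea — use Bang/Tarski planks plus the spreadness estimate from Lemma~\ref{main_dcomposition} — is indeed the right one, and it is what the paper does, but the proposal has a real gap at step (iv), the rounding, and that step is where essentially all the work in the paper's proof actually happens. The plank theorem gives you a point $p$ in the Euclidean ball of radius $\sqrt{|N_1|}$ avoiding all the slabs of half-width $w$; that point has no reason to be anywhere near $[-1,1]^{N_1}$, let alone the vertex set. Rounding $p$ to a vertex $x'$ changes each coordinate by an amount that can be $\Theta(1)$ or larger, and you need to control, for a \emph{fixed} $i$, the quantity $|\langle x'-p,\phi_i v_i'\rangle|\le\sum_{j}|x'_j-p_j|\,|\phi_i v'_{ij}|$. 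The bound~\eqref{small_linf} you invoke is a \emph{column} bound $\sum_{i\in K_3}|\phi_i v'_{ij}|<n^{-0.078}$ — a sum over rows for fixed $j$ — and it says nothing about $\sum_j|\phi_i v'_{ij}|=\|\phi_i v_i'\|_1$ for fixed $i$. The latter is only bounded by $\sqrt{\|\phi_i v_i'\|_0}$, which via Lemma~\ref{lemma:nati} can be as large as $\sqrt{2k}\approx n^{0.26}$, vastly exceeding your plank half-width $w\approx n^{-0.02}$. So the per-row distortion cannot be absorbed, and the argument breaks.

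The paper avoids this by using a \emph{discrete} version of Bang (Lemma~\ref{lemma:bang}): it produces signs $\epsilon\in\{\pm1\}^{K_3}$ and then the specific point $z=\theta V'^{T}\epsilon$, where $\theta=n^{0.078}$ is tuned \emph{against}~\eqref{small_linf} precisely so that $\|z\|_\infty\le\theta\cdot n^{-0.078}=1$. Thus $z$ is already in $[-1,1]^{N_1}$, and the whole rounding problem shrinks dramatically. Even then, $z$ is not a vertex, and the paper needs two further devices: Claim~\ref{clm:linalg} (a linear-algebra argument) to replace $z$ by $w\in[-1,1]^{N_1}$ with the \emph{same} inner products against all $v_i'$ and at most $k$ coordinates off $\pm1$; and then a probabilistic rounding of those at most $k$ bad coordinates, analyzed by splitting the rows into a low-variance regime (Bernstein) and a high-variance regime (the antichain anti-concentration, Claim~\ref{anti_chain_vertex}), with a counting bound to make the union bound go through. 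Your proposal skips all three of these steps — choosing $\theta$ to land inside the unit cube, the linear-algebraic reduction to $\le k$ free coordinates, and the variance-split probabilistic rounding — and without them the ``round $p$ to a vertex'' step simply does not close.
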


Before proving the final claim, let us see how it completes the proof
of the theorem.
The final part $u^{(III)}$ of $u$ is $x'$.
The vertex $u$ is indeed not covered by any of the hyperplanes:
All hyperplanes in $K_1$ are avoided due to~$u^{(I)}$,
because the rest of their entries are zero.
All hyperplanes in $K_2$ are avoided due to~$u^{(II)}$,
because the rest of their entries are zero.
All hyperplanes in $K_3$ are avoided due to~$u^{(III)}$.
All hyperplanes in $K_4$ are avoided due to~$u^{(II)}$ as well,
because $u^{(III)}$ has no real affect.

A key piece of the argument is a lemma that Ball isolated
from Bang's solution
of Tarski's plank problem~\cite{bang1951solution,ball1991plank}.
\begin{lemma}[\cite{bang1951solution,ball1991plank}] \label{lemma:bang}
Let $M$ be a $k \times k$ real symmetric matrix with ones on the diagonal.
Let $\gamma_1, \ldots, \gamma_k \in \R$ and $\theta \geq 0$. 
Then, there is $\epsilon \in \{\pm 1\}^k$ so that for every $i \in [k]$,
\[|\theta (M \epsilon)_i - \gamma_i| \geq \theta . \]
\end{lemma}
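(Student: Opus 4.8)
The plan is a one‑shot variational argument: among the finitely many sign vectors, choose one that maximizes an appropriate quadratic functional, and then read off the conclusion from the fact that flipping any single coordinate cannot increase its value. Concretely, I would define $g \colon \{\pm 1\}^k \to \R$ by
\[
g(\epsilon) = \theta \langle \epsilon, M\epsilon \rangle - 2 \langle \epsilon, \gamma \rangle ,
\]
and let $\epsilon \in \{\pm 1\}^k$ be a point at which $g$ attains its maximum (such a point exists since the domain is finite). I claim this single $\epsilon$ works for all coordinates at once.

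Fix $i \in [k]$ and let $\epsilon'$ be $\epsilon$ with its $i$th coordinate flipped, that is $\epsilon' = \epsilon - 2\epsilon_i e_i$ where $e_i$ is the $i$th standard basis vector. First I would expand $g(\epsilon') - g(\epsilon)$. Using that $M$ is symmetric with $M_{ii}=1$, together with $\epsilon_i^2 = 1$, one gets $\langle \epsilon', M\epsilon' \rangle = \langle \epsilon, M\epsilon \rangle - 4\epsilon_i (M\epsilon)_i + 4$ and $\langle \epsilon', \gamma \rangle = \langle \epsilon, \gamma \rangle - 2\epsilon_i \gamma_i$, so that
\[
g(\epsilon') - g(\epsilon) = 4\theta - 4\theta\, \epsilon_i (M\epsilon)_i + 4 \epsilon_i \gamma_i .
\]
By maximality of $\epsilon$ the left‑hand side is at most $0$, which rearranges to $\epsilon_i\big(\theta (M\epsilon)_i - \gamma_i\big) \geq \theta$. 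Since the left‑hand side equals $\pm\big(\theta (M\epsilon)_i - \gamma_i\big)$ and $\theta \geq 0$, this yields $|\theta (M\epsilon)_i - \gamma_i| \geq \theta$; as $i$ was arbitrary, the lemma follows.

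I do not expect a real obstacle here — the content of the argument is just the observation that this particular objective makes the cross terms cancel — but a few points need care. The matrix $M$ is not assumed positive semidefinite, so no convexity is available; it is precisely the symmetry of $M$ and its unit diagonal that are used in the expansion above, and nothing else about $M$ enters. The sign conventions are load‑bearing: one must \emph{maximize} $g$ rather than minimize it, and the coefficient on $\langle \epsilon, \gamma \rangle$ must be $-2$ rather than $-1$, since these are exactly the choices that leave the constant $4\theta$ — and not $2\theta$ or $0$ — on the right after dividing by $4$. Finally, the degenerate case $\theta = 0$ is vacuous and is absorbed automatically.
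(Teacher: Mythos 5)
Your proof is correct, and it is exactly the standard variational argument from the cited sources (Bang's original trick, as isolated by Ball): the paper itself does not reprove the lemma but refers to precisely this maximization of $\theta\langle\epsilon,M\epsilon\rangle-2\langle\epsilon,\gamma\rangle$ followed by a single-coordinate flip. All the algebra checks out, including the use of symmetry, the unit diagonal, and the sign conventions you flag.
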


\begin{proof}[Proof of Claim~\ref{clm:finalone}]
For $i \in K_3$, 
let $\phi_i>0$ be so that $\|\phi_i v'_i\|_2 = 1$
and let
\[ \gamma_i = \phi_i (\mu_i - \langle \tilde u, \tilde v_i \rangle).\]
Let $V'$ be the matrix with rows $\phi_i v'_i \in \R^{N_1}$
for $i \in K_3$.
Consider the $K_3 \times K_3$ matrix $M = V'V'^{T}$.
Let
\[\theta = n^{0.078} .\]
By Bang's lemma, there is $\epsilon \in \{\pm 1\}^{K_3}$ so that for each $i \in K_3$, 
\begin{align}
\label{eqn:bang}
    |\langle \phi_i v'_i , z \rangle - \gamma_i| \geq \theta ,
\end{align}
where 
\[z = \theta V'^T \epsilon . \]
By (\ref{small_linf}), 
\[\|z\|_{\infty} \leq 1 . \]
We round $z$ to a vertex of the cube $\{\pm 1\}^{N_1}$ in two steps. 
The first step uses linear algebra as in~\cite{yehuda2021slicing}.
\begin{claim}[\cite{yehuda2021slicing}]
\label{clm:linalg}
There is $w \in \R^{N_1}$ so that the following hold: 
\begin{enumerate}
    \item For each $ i \in K_3$, we have $\langle w,v_i' \rangle = \langle z,v_i' \rangle$.
    \item $\|w\|_{\infty} \leq 1$. 
    \item Let $N_0$ be the set of coordinates in $w$ that are not $\pm 1$.
    Then $|N_0| \leq k$.
\end{enumerate}
\end{claim}

If $N_0$ is empty then we are done, in light of~\eqref{eqn:bang}
and because $w \in \{\pm 1\}^{N_1}$
is so that
$$\langle w , v'_i \rangle + \langle \tilde u, \tilde v_i \rangle = \mu_i 
\quad \Leftrightarrow \quad 
\langle w , \phi_i v'_i \rangle = \gamma_i .$$
Otherwise, we need to round the coordinates in $N_0$ to $\pm 1$.
This is done using extra randomness.
Let $\delta \in \R^{N_1}$ be a random vector distributed as follows.
Its coordinates are independent so that $\delta_j + w_j$ takes values in $\{\pm 1\}$ and  $\E \delta_j = 0$.
Its marginals can be computed by
\begin{equation*}
    w_j = \E[\delta_j + w_j] = 2\Pr[\delta_j = 1-w_j] - 1
\end{equation*}
so that 
\begin{equation*}
    \Pr[\delta_j = 1-w_j] = \frac{1 + w_j}{2} 
\end{equation*}
and
\begin{align*}
    \E[\delta_j^2] &= \frac{1 + w_j}{2} (1-w_j)^2 + \frac{1-w_j}{2} (-1-w_j)^2 
    = 1 - w_j^2 .
\end{align*}
Consider the random vertex 
$$x' = \delta + w \in \{\pm 1\}^{N_1}.$$
The rest of the analysis is split between two cases. 
For each $i \in K_3$, let 
\[\sigma_i^2 = \sum_{j \in N_1} (1-w_j^2) \phi_i^2 {v'_{ij}}^2 . \]
First, consider some $i$ so that $\sigma_{i}^2 \leq n^{0.151}$; if there are no such $i$'s then continue to case two. 
We use the following concentration of measure.
\begin{theorem}[Bernstein] Let $z_1, \ldots, z_{\ell}$ be independent random variables with mean zero that are almost surely at most two in absolute value. Let $\sigma^2 = \sum_{j} \E[z_j^2]$. Then, for all $t > 0$, we have 
$\Pr\big[\sum_{j} z_j \geq t\big] \leq \exp\big(-\frac{t^2}{2\sigma^2 + 2t}\big) $.
\end{theorem}
By~\eqref{eqn:bang}, by the choice of $\theta$
and by Claim~\ref{clm:linalg},
\begin{align*}
    \Pr[\langle x', \phi_i v_i' \rangle = \gamma_i ]
    =  \Pr[\langle \delta , \phi_i v_i' \rangle = \gamma_i - \langle z , \phi_i v_i' \rangle] \leq \Pr[|\langle  \delta, \phi_i v_i' \rangle| \geq n^{0.078}] .
\end{align*}
By Bernstein's inequality, 
\begin{align*}
    \Pr[|\langle  \delta, \phi_i v_i' \rangle| \geq n^{0.078}]
    &\leq 2 \exp\Big(-\frac{n^{0.154}}{2n^{0.151} + 2n^{0.078}}\Big) .
\end{align*}
This probability is small enough so that we can safely apply the union bound
over all such $i$'s.

We now move to the second and final case. 
We harness anti-concentration one more time
(for the proof see Section~\ref{section:anti-concentration}).

\begin{claim}  \label{anti_chain_vertex}
There is a constant $C>0$ so that the following holds.
Let $P$ be a product distribution on $\{\pm 1\}^n$.
Let $\sigma^2$ be the variance of $\sum_j z_j$ for $z \sim P$.
Assume that $\sigma^2 > 0$.
For every vector $v \in \R^n$ so that $\|v\|_0 = n$ and $\gamma \in \R$,
we have $\Pr_{z \sim P}[\langle v,z \rangle = \gamma] \leq \frac{C}{\sigma}$.
\end{claim}

Fix some $i$ so that $\sigma_i^2 > n^{0.151}$. 
Let $J$ be the set of non-zero coordinates in $v'_i$.
The variance of $\sum_{j \in J} x'_j$ is
$\sigma_J^2 = \sum_{j \in J} \tfrac{1-w_j^2}{4}$.
Using~\eqref{small_l2},
\begin{align}
     4 n^{-0.196} \sigma_J^2 \geq \sum_{j \in J} (1-w_j^2) \phi^2_i {v'_{ij}}^2 =
     \sigma_i^2 > n^{0.151} .
\end{align}
By Claim~\ref{anti_chain_vertex}, the probability that 
$\langle x', \phi_i v'_i \rangle = \gamma_i$ is at most $\frac{2 C}{n^{0.1735}}$.

We need to justify the final union bound.
By \eqref{small_l2} again, 
\begin{align*}
    \sum_{i \in K_3} \sigma_i^2 = \sum_{j \in N_1} (1 - w_j^2) \sum_{i\in K_3} \phi_i^2 {v'_{ij}}^2 < n^{-0.196} |N_0| \leq n^{0.324} .
\end{align*}
It follows that the number of $i$'s with $\sigma_i^2 > n^{0.151}$ is at most
$n^{0.324-0.151} \leq n^{0.173}$.
The probability that 
there exists $i$ with $\sigma_i^2 > n^{0.151}$ so that
$\langle x', \phi_i v'_i \rangle = \gamma_i$ is at most
\begin{equation*}
    n^{0.173} \cdot \tfrac{2C}{ n^{0.1735}} \leq n^{-0.0004}  . \qedhere
\end{equation*}
\end{proof}

\section{Decomposing the matrix} \label{decomposing_matrix_section}

For the proof of Lemma~\ref{main_dcomposition},
we need a version of Lemma 13 in~\cite{yehuda2021slicing}.
For completeness, we include its proof below.

\begin{lemma}[\cite{yehuda2021slicing}]
\label{lemma:decomposing}
Let $V \in \R^{k \times n}$ be a matrix so that $k \leq n^{0.52}$.
Then, there is a partition of the rows of $V$ to two parts $[k] = L_1 \cup L_2$ and a partition of the columns to two parts $[n] = M_1 \cup M_2$ with $|M_2| \leq n^{0.7171}$ such that the following hold: 
\begin{enumerate}
    \item \label{itm:oneLem}
    Let $V' = (v'_{ij})$ be the submatrix of $V$ defined by rows in $L_1$ and columns in $M_1$; normalize the non-zero rows in $V'$ to have $\ell_2$-norm one.
    For every column $j \in M_1$, we have $\sum_{i \in L_1} {v'}^{2}_{ij} < n^{-0.196}$. 
    \item Every row $i \in L_2$ has many scales, and the position of its smallest scale contains the $M_1$ columns. 
\end{enumerate}
\end{lemma}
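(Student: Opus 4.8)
The plan is to extract the heavy columns and ``large-norm'' rows iteratively, alternating between a column-peeling step and a row-peeling step, until the remaining submatrix is spectrally flat in the precise sense of item~\eqref{itm:oneLem}. First I would set up an iterative procedure that maintains a shrinking set of ``active'' rows and a growing set $M_2$ of ``removed'' columns. At each stage, look at the submatrix on the active rows and the columns in $M_1 := [n] \setminus M_2$, normalize the active rows to unit $\ell_2$-norm, and ask whether some column $j$ violates $\sum_{i} {v'}^2_{ij} < n^{-0.196}$. If so, move that column into $M_2$ and repeat. The key counting observation is that each normalized row contributes total weight $1$ across the columns, so there are at most $k / n^{-0.196} = k \cdot n^{0.196}$ heavy-column removals before the condition stabilizes; with $k \leq n^{0.52}$ this is at most $n^{0.716}$, comfortably inside the budget $|M_2| \leq n^{0.7171}$. (One has to be slightly careful that renormalizing after each removal does not blow up the count; the clean way is to bound the number of removals in a single sweep by a potential/weight argument rather than iterating naively — this is the place to be precise.)

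Second, once no column is heavy, I would split the active rows from the rows that were ``set aside'' during the process: a row gets set aside precisely when, at some point, removing heavy columns has shrunk its $M_1$-restriction enough that the row's $\ell_2$-mass is concentrated in the already-removed $M_2$-columns relative to what remains. These set-aside rows form $L_2$, the remaining rows form $L_1$, and the column partition is the final $(M_1, M_2)$. For $L_1$, item~\eqref{itm:oneLem} holds by construction (the stopping condition). For $L_2$, I need to produce the ``many scales'' structure with the smallest scale containing $M_1$: here I would invoke Claim~10 of~\cite{yehuda2021slicing} (whose constant $C_0$ is exactly the one named in Definition~\ref{definition:many_scales}), which says that any vector can be chopped into $S = \floor{n^{0.001}}$ pieces whose $\ell_2$-norms drop by a factor $C_0$ at each step, and arrange the chopping so that $M_1$ sits in the last (smallest-norm) piece — this is legitimate because a row ended up in $L_2$ exactly because its $M_1$-portion is tiny compared to the rest, i.e.\ there is a genuine gap of scales separating the $M_1$ coordinates from the bulk.

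The main obstacle I anticipate is the bookkeeping in the first phase: making the number of removed columns provably at most $n^{0.7171}$ while allowing renormalization, and simultaneously guaranteeing that \emph{every} row left in $L_1$ really does satisfy the column bound on the \emph{final} $M_1$ (not just the $M_1$ at the moment it was last inspected). The standard fix is to run the peeling to a fixed point: terminate only when a full pass over all columns finds no violation, and bound the total number of removals across all passes by a single global weight inequality — each removed column carried $\geq n^{-0.196}$ of some unit-norm row's mass, and total mass is $k$, giving $\le k n^{0.196} \le n^{0.716} < n^{0.7171}$. The scale-gap verification for $L_2$ is then routine given Claim~10 of~\cite{yehuda2021slicing}, since the defining property of $L_2$ is exactly a large multiplicative gap between $\|v_i|_{M_1}\|_2$ and $\|v_i|_{M_2}\|_2$, which lets us place $M_1$ inside the smallest scale.
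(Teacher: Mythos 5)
Your high-level plan (iteratively peel heavy columns, retire certain rows, stop when every remaining column is light) matches the paper's strategy, but there are two genuine gaps that the paper's proof resolves and your sketch does not.

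\textbf{The counting gap.} You correctly flag the danger: if you renormalize eagerly after every column removal, the ``total mass is $k$'' weight argument fails, because a row's $M_1$-mass is reset to $1$ each time and can keep feeding mass into newly removed columns. Your proposed fix --- run to a fixed point and invoke a single global weight inequality giving $T \le k\,n^{0.196}$ --- does not actually repair this, because there is no global potential of size $k$ that is conserved under renormalization. The paper's resolution is a \emph{lazy} renormalization: after removing a column, do \emph{not} renormalize; only when a row's current $M_1$-mass falls below a fixed threshold $\tau$ (chosen so that $(1-\tau)/\tau = C_0^2$) is a ``drop'' recorded and the row rescaled to unit mass, and a row suffering more than $S = \lfloor n^{0.001}\rfloor$ drops is retired to $L_2$. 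Between consecutive renormalizations a row can shed at most mass $1$, so each row's lifetime contribution to the removed columns is $< S$, not $<1$. This is why the true bound is $T < S k / n^{-0.1961} \le n^{0.7171}$, carrying an extra $n^{0.001}$ factor that your $k\,n^{0.196} \le n^{0.716}$ estimate silently omits; the budget $n^{0.7171}$ is tuned precisely to absorb that $S$ factor plus the slack $n^{0.0001}$ needed for the final constant-factor renormalization (rows in $L_1$ end with mass in $[\tau,1]$, so rescaling inflates column masses by at most $1/\tau$, a constant).

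\textbf{The many-scales gap.} You suggest placing a row in $L_2$ when there is ``a genuine gap of scales separating the $M_1$ coordinates from the bulk,'' and you appeal to Claim~10 of~\cite{yehuda2021slicing} as if it asserted that any vector admits an $S$-level geometric decomposition. It does not; ``many scales'' (Definition~\ref{definition:many_scales}) is a nontrivial structural property requiring $S$ nested $\ell_2$-drops by factor $C_0$ each, and a single large gap between the $M_1$-part and the $M_2$-part yields only one scale, not $S$. In the paper, the $S$ scales are \emph{manufactured} by the process itself: each drop event, by the choice $(1-\tau)/\tau = C_0^2$, certifies that the mass removed since the last renormalization exceeds the remaining mass by the required factor, so each drop contributes one more scale; a row is moved to $L_2$ only after $S$ drops, at which point it has $S$ scales by construction and its smallest scale contains the final $M_1$ because that is all that survives. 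Without this drop-by-drop bookkeeping, your $L_2$ rows have no reason to satisfy the many-scales requirement.
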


\begin{proof}
We use the following terminology.
The mass of a vector $w$ is $\|w\|_2^2$.
Initialize $L_1 = [k], L_2 = \emptyset, M_1 = [n]$ and $M_2 = \emptyset$.
If there is no column $j \in M_1$ with mass
\begin{equation*}
    \sum_i {v_{ij}}^2 \geq n^{-0.1961} ,
\end{equation*}
then we are done.
Otherwise, there are such columns. 
Start moving them one-by-one from $M_1$ to $M_2$.
When we move a column, the norm inside $M_1$ of each row changes, but we do not immediately renormalize it.
 
Let $\tau > 0$ be so that 
$\tfrac{1 - \tau}{\tau} = C^2_0$, where $C_0$ is from Definition \ref{definition:many_scales}. 
If for a given row $i \in L_1$, after the removal of a column, the total current mass inside $M_1$ becomes less than $\tau$, then mark a ``drop'' for row $i$.
At the same time, 
if row $i$ is non-zero inside $M_1$,
renormalize it so that its norm inside $M_1$ is one.

When a drop occurs, we get one more scale for $v_i$, by the choice of $\tau$. 
If a row is dropped more than $S = \floor{n^{0.001}}$ times, then move it from $L_1$ to $L_2$.
Each row in $L_2$ has many scales and the position of its minimum scale contains $M_1$.

Let $\sigma_t$ be the mass of the column that was added to $M_2$ at time $t$, with the normalization at time $t$. 
So, for all $t$, 
we have $\sigma_t \geq n^{-0.1961}$.
If there are $T$ iterations, then 
\begin{equation*}
    \sum_t \sigma_t \geq n^{-0.1961} T .
\end{equation*}
Let $\sigma_{t,i}$ be the contribution of row $i$ to $\sigma_t$; it is zero if the row is in $L_2$ at time $t$. 
Because every row is dropped at most $S$ times, for each row $i$, we have $\sum_t \sigma_{t,i} < S$. 
So, 
\begin{equation*}
    \sum_t \sigma_t  < Sk \leq n^{0.521} .
\end{equation*}
After $T \leq n^{0.7171}$ steps, we get a matrix $V'$ so that the mass of every column $j \in M_1$ in it is
\begin{equation*}
    \sum_{i \in L_1} {v'_{ij}}^2 < n^{-0.1961} .
\end{equation*}
The rows of this matrix are not yet normalized. 
The norm of each row is either zero or at least~$\sqrt{\tau}$.
The renormalization can not increase the norm of the columns by more than $\frac{1}{\sqrt{\tau}}$.
After the final renormalization we get
\begin{equation*}
    \sum_{i \in L_1} {v'_{ij}}^2 < \frac{n^{-0.1961}}{\sqrt{\tau}} < n^{-0.196} .
   \qedhere
\end{equation*}
\end{proof}

\begin{proof}[Proof of Lemma \ref{main_dcomposition}]
We start by using the main lemma from~\cite{linial2005essential} which is proved using polynomial algebra.
The lemma relies on the fact that $v_1,\mu_1,\ldots,v_k,\mu_k$
define an essential cover.

\begin{lemma}[\cite{linial2005essential}] \label{lemma:nati}
For all $i \in [k]$, we have
$\|v_i\|_0 < 2 k$. 
\end{lemma}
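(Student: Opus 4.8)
The plan is to fix one index, say $i=k$, use minimality to isolate a single vertex that is covered only by $h_k$, and then distill from the other $k-1$ hyperplanes a low-degree multilinear polynomial whose non-vanishing set is trapped inside $h_k$; a degree estimate for such a polynomial yields the bound.

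In detail: by (E\ref{essential-condition:minimal}) there is a vertex $z$ lying on $h_k$ and on none of $h_1,\dots,h_{k-1}$. Put $S=\{j:v_{k,j}\neq 0\}$ and $m=\|v_k\|_0=|S|$, and let $\tilde p$ be the reduction modulo the relations $x_j^2=1$ of $p(x)=\prod_{i<k}(\langle x,v_i\rangle-\mu_i)$; so $\tilde p$ is multilinear of degree at most $k-1$ and agrees with $p$ on $\{\pm1\}^n$. A vertex $x$ satisfies $\tilde p(x)\neq 0$ precisely when it avoids $h_1,\dots,h_{k-1}$, and then (E\ref{essential-condition:cover}) forces $x\in h_k$; since $z$ is such a vertex, $\{x:\tilde p(x)\neq 0\}$ is nonempty and contained in $h_k$. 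Freezing every coordinate outside $S$ to the value $z_j$ produces a nonzero multilinear polynomial $\hat p=\sum_{T\subseteq S}c_T\prod_{j\in T}x_j$ in the $m$ variables $(x_j)_{j\in S}$, of some degree $d\le k-1$, whose support lies in the set $A=\{y\in\{\pm1\}^S:\langle y,w\rangle=\mu_k\}$, where $w=(v_{k,j})_{j\in S}$ has all coordinates nonzero.

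It then suffices to show $d\ge m/2$, since this gives $k-1\ge d\ge m/2$, i.e.\ $\|v_k\|_0\le 2k-2<2k$. The degree-one polynomial $\ell(y)=\langle y,w\rangle-\mu_k$ vanishes on $A$, so $\hat p\cdot\ell$ vanishes on all of $\{\pm1\}^S$, hence its multilinear reduction is identically zero. Extracting from $\widetilde{\hat p\,\ell}=0$ the coefficient of a monomial $\prod_{j\in R}x_j$ with $|R|=d+1$, where all contributions of degree larger than $d$ are absent, gives
\[
\sum_{j\in R}w_j\,c_{R\setminus j}=0\qquad\text{for every }(d+1)\text{-subset }R\subseteq S .
\]
Rescaling $c_T\mapsto c_T/\prod_{j\in T}w_j$ (legitimate since $w_j\neq 0$) turns this into $\sum_{j\in R}c'_{R\setminus j}=0$, i.e.\ the degree-$d$ coefficient vector $(c'_T)_{|T|=d}$ lies in the kernel of the up-map $U_d$ from level $d$ to level $d+1$ of the Boolean lattice on $S$. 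But $U_d$ is injective whenever $d<m/2$: with $D_d=U_{d-1}^{\top}$ (the down-map), the classical identity $D_{d+1}U_d-U_{d-1}D_d=(m-2d)\,\mathrm{Id}$ on level $d$ shows that $U_d^{\top}U_d=D_{d+1}U_d$ equals $U_{d-1}U_{d-1}^{\top}+(m-2d)\mathrm{Id}$, which is positive definite there. Hence if $d<m/2$ every degree-$d$ coefficient of $\hat p$ vanishes, contradicting $\deg\hat p=d$; so $d\ge m/2$.

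The step I expect to be the obstacle is exactly this degree bound. A purely combinatorial count is too weak: a nonzero multilinear polynomial of degree $d$ is nonzero on at least $2^{m-d}$ vertices, and $|A|\le 2^m/\sqrt m$ by anti-concentration, which only yields a bound of order $\log m$ on $d$. The idea is instead to convert ``$\hat p$ is supported on the hyperplane section $A$'' into the exact annihilation $\widetilde{\hat p\,\ell}=0$ and then into a kernel condition for the Boolean up-map, for which the linear-in-$m$ injectivity range is available. One should also check the routine bookkeeping: that freezing the coordinates outside $S$ keeps $\hat p$ nonzero, and that the monomial-coefficient comparison is carried out consistently in the multilinear quotient ring.
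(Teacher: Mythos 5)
Your proof is correct and follows the same route as the Linial--Radhakrishnan argument that the paper cites: isolate a privately covered vertex, form the multilinear reduction of $\prod_{i<k}(\langle x,v_i\rangle-\mu_i)$, restrict to the support of $v_k$, and conclude with a degree lower bound for a multilinear polynomial supported in an affine hyperplane section of $\{\pm1\}^m$. Your derivation of the degree bound via the annihilation $\widetilde{\hat p\,\ell}=0$, the rescaling $c'_T=c_T/\prod_{j\in T}w_j$, and injectivity of the Boolean-lattice up-map $U_d$ for $d<m/2$ (through $U_d^{\top}U_d=U_{d-1}U_{d-1}^{\top}+(m-2d)\mathrm{Id}$) is a clean and correct instance of the polynomial-algebra argument the paper attributes to~\cite{linial2005essential}.
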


Lemma~\ref{lemma:nati} implies
that the total number of non-zero entries in $V$ is at most
$2k^2 \leq \frac{n^{1.04}}{50}$.
Let $N_4$ be the set of columns with more than $n^{0.04}$ non-zero entries.
Thus, $|N_4| \leq \frac{n}{50}$.
%

Construct a nested sequence of matrices $V^{(0)},V^{(1)},\ldots,V^{(T)}$
by repeatedly applying Lemma \ref{lemma:decomposing} as follows. 
The matrix $V^{(0)}$ is obtained from $V$ by removing
the $N_4$ columns. All columns in $V^{(0)}$ are therefore sparse.
Apply Lemma \ref{lemma:decomposing} to $V^{(0)}$
to get $L_1,L_2$ and $M_1,M_2$.
Let $Z_1$ be the set of rows in $V^{(0)}$
so that their $M_1$ part is zero.
That is, for $i \in Z_1$ the projection $v_i|_{M_1}$ of $v_i$
to the $M_1$ coordinates is zero.
Rows in $Z_1$ may come from $L_1$ as well as $L_2$.
Let $k_1 = |Z_1|$ and $n_1 = |M_2|$.
Check if the following two conditions hold:
\begin{align}
    k_1 > n_1^{0.332}  \label{condition1:large_k}
\end{align}
and there is $i^* \in Z_1$ so that
the projection of $v_{i^*}$ to coordinates in $M_2$
has few non-zero entries:
\begin{align}
    \|v_{i^*}|_{M_2}\|_0 \leq 4 k_1^2 . \label{condition2: sparse_V}
\end{align}

\begin{description}
\item[If~\eqref{condition1:large_k} holds]
Let $L'_1 = [k] \setminus Z_1$. 
Let $V^{(1)}$ be the $L'_1 \times M_1$ submatrix of $V^{(0)}$,
and apply the same procedure on $V^{(1)}$ to construct the matrices $V^{(2)},\ldots,V^{(T)}$.

\item[If~\eqref{condition1:large_k} does not hold
but \eqref{condition2: sparse_V} holds] 
Let $L'_1 = [k] \setminus \{i^*\}$
and let $M'_1$ be the zero coordinates of $v_{i^*}$.
Let $V^{(1)}$ be the $L'_1 \times M'_1$ submatrix of $V^{(0)}$
and apply the procedure on $V^{(1)}$ to get $V^{(2)},\ldots,V^{(T)}$.

\item[If both conditions do not hold] Output the $(L_1 \setminus Z_1) \times M_1$ submatrix of the input matrix $V^{(0)}$.

\end{description}

We got a sequence of nested matrices $V^{(0)},\ldots,V^{(T)}$
and a corresponding sequence $(n_1,k_1),\ldots,(n_T,k_T)$.
The matrix $V^{(T)}$ satisfies both~\eqref{condition1:large_k}
and~\eqref{condition2: sparse_V} but
a priori it may be empty.

\subsubsection*{The partitions}
It can be helpful to recall Figure~\ref{fig:galaxy}.

Set $N_3$ to be the set of columns that are not in $V^{(T-1)}$;
this includes all the dense columns $N_4$,
so that all columns not in $N_3$ are sparse.
Set $N_2$ to be the set of columns in $V^{(T-1)}$ but not in $V^{(T)}$.
And set $N_1$ to be the set of columns in $V^{(T)}$.

Set $K_1$ to be the set of rows outside $V^{(T-1)}$;
the $K_1 \times (N_1 \cup N_2)$ submatrix of $V$ is zero.
Set $K_2$ to be $Z_T$;
the $K_2 \times N_1$ submatrix of $V$ is zero,
and no row in the $K_2 \times N_2$ submatrix of $V$ is sparse
(condition~\eqref{condition2: sparse_V}).
Set $K_3$ to be the rows inside $V^{(T)}$; 
the norm of the columns in the $K_3 \times N_1$
submatrix of $V$ can be bounded via item~\ref{itm:oneLem}
in Lemma~\ref{lemma:decomposing}.
Set $K_4$ to be the non-zero rows with many scales 
in $V^{(T-1)}$ and not in $V^{(T)}$.
No row in the $(K_3 \cup K_4) \times N_1$
submatrix of $V$ is zero, by construction of $Z_T$.

\subsubsection*{The size of $N_1$}
We already know that $N_4$ is small.
It remains to show that $N_2 \cup N_3$ is small as well.
The total number of columns removed due to condition \eqref{condition1:large_k} is smaller than $\frac{n}{8}$, because 
otherwise we get a contradiction via the following claim.

\begin{claim}
Let $\lambda \in (0,1]$ and $\xi_1,\ldots,\xi_m \in [0,A]$ be so that
$\sum_{t \in [m]} \xi_t \geq B$ then
$\sum_{t \in [m]} \xi_t^\lambda \geq \tfrac{B}{2A} A^\lambda$.
\end{claim}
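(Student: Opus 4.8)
The plan is to reduce the statement to a single pointwise inequality and then sum. Concretely, I would first establish that $\xi^\lambda \ge A^{\lambda-1}\,\xi$ for every $\xi \in [0,A]$. For $\xi = 0$ both sides vanish. For $\xi \in (0,A]$, since $\lambda - 1 \le 0$ the function $t \mapsto t^{\lambda-1}$ is nonincreasing on $(0,\infty)$, so $\xi^{\lambda-1} \ge A^{\lambda-1}$, and multiplying both sides by $\xi > 0$ gives the inequality. Equivalently, $f(\xi) = \xi^\lambda$ is concave on $[0,A]$ because $\lambda \le 1$, so its graph lies above the chord joining $(0,0)$ and $(A, A^\lambda)$, which is precisely the line $\xi \mapsto A^{\lambda-1}\xi$.

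Next I would sum this pointwise bound over $t \in [m]$ and use the hypothesis $\sum_{t} \xi_t \ge B$:
\[
\sum_{t \in [m]} \xi_t^\lambda \;\ge\; A^{\lambda-1}\sum_{t \in [m]} \xi_t \;\ge\; A^{\lambda-1} B \;=\; \frac{B}{A}\,A^\lambda \;\ge\; \frac{B}{2A}\,A^\lambda .
\]
This is the asserted inequality — in fact with the constant $1$ in place of $\tfrac{1}{2}$, so the stated bound holds with room to spare; keeping the weaker constant costs nothing and may simplify bookkeeping elsewhere.

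There is essentially no obstacle in this argument; it is a one-line convexity estimate. The only things that need a little care are the degenerate cases and the orientation of the inequality. One should assume $A > 0$ (if $A = 0$ then all $\xi_t = 0$ and the statement is vacuous), and one should make sure monotonicity is applied in the correct direction: the minimum of $t \mapsto t^{\lambda-1}$ over $(0,A]$ is attained at the right endpoint $t = A$ exactly because $\lambda \le 1$ — had we had $\lambda > 1$ the inequality would reverse. Since in every application of the claim the exponent is at most one, this is fine.
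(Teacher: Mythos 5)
Your proof is correct, and it takes a genuinely different and in fact cleaner route than the paper. The paper partitions the index set $[m]$ greedily into groups $R_1,\ldots,R_Q$ each with total mass in $[A,2A]$, applies the subadditivity inequality $\alpha^\lambda+\beta^\lambda\geq(\alpha+\beta)^\lambda$ within each group to get $\sum_{t\in R_q}\xi_t^\lambda\geq A^\lambda$, and then counts groups ($Q\geq B/(2A)$); the factor $\tfrac12$ in the claim is precisely the slack lost to the $2A$ upper cap on a group's total. You instead prove the pointwise estimate $\xi^\lambda\geq A^{\lambda-1}\xi$ for $\xi\in[0,A]$ — the statement that the concave function $\xi\mapsto\xi^\lambda$ lies above its secant through $(0,0)$ and $(A,A^\lambda)$ — and then sum linearly, which gives the bound $\sum_t\xi_t^\lambda\geq A^{\lambda-1}B=\tfrac{B}{A}A^\lambda$, beating the claimed constant by a factor of $2$. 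Your approach avoids the bookkeeping the paper's bucketing requires (in particular, it sidesteps the unstated issue of a leftover bucket of mass $<A$ at the end of the greedy grouping, which the paper quietly absorbs into the factor $2$) and is both shorter and quantitatively sharper; the only hypothesis you need is $A>0$, as you correctly note, which is automatic whenever the claim is nonvacuous.
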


\begin{proof}
Partition $[m]$ to sets $R_1,R_2,\ldots,R_Q$ so that for 
all $q \in [Q]$ we have $A \leq \sum_{t \in R_q} \xi_t \leq 2A$
and $Q \geq \tfrac{B}{2A}$.
Use the inequality
$\alpha^\lambda + \beta^\lambda \geq (\alpha+\beta)^\lambda$
that is valid for all $\alpha,\beta \geq 0$.
For each $q$, we have $\sum_{t \in R_q} \xi_t^\lambda
\geq A^\lambda$. Summing over all $q$ completes the proof.
\end{proof}

Denote by $\tau \subseteq [T]$ the set of times at which condition
\eqref{condition1:large_k} is applied.
Summing over these times we get
\begin{align*}
    k \geq  \sum_{t \in \tau} k_t \geq  \sum_{t \in \tau} n_t^{0.332} .
\end{align*}
By Lemma~\ref{lemma:decomposing}, we know that $n_t \leq n^{0.7171}$ for all $t$.
The condition $\sum_{t \in \tau} n_t \geq \tfrac{n}{8}$ hence implies
\begin{align*}
 \sum_{t \in \tau} n_t^{0.332} \geq \frac{n}{16 n^{0.7171}} \cdot (n^{0.7171})^{0.332} \geq n^{0.52} ,
\end{align*}
which is a contradiction.

The total number of columns removed when \eqref{condition1:large_k} does not hold and \eqref{condition2: sparse_V} holds can be bounded as follows.
Because the number of $t \not \in \tau$ is at most $k$,
and because $n_t \leq n^{0.7171}$ for all $t$,
the number of columns is at most
\[\sum_{t \not \in \tau} 4 k^2_t \leq \sum_{t \not \in \tau} 4 n_t^{0.664}
\leq  4 n^{0.477} k
< \frac{n}{8} . \qedhere \]
\end{proof}

\section{Antichains} \label{section:anti-concentration}

Here we prove the last anti-concentration bound we need
(Claim~\ref{anti_chain_vertex}).
The set~$\{\pm 1\}^n$ has a standard partial order:
$x \leq y$ iff $x_j \leq y_j$ for all $j \in [n]$.
An antichain in $\{\pm 1\}^n$ is a 
set of incomparable vertices.
The following theorem 
states that antichains do not have large mass in general product measures.\footnote{The assumption that $P$ is non-trivial
in the theorem from~\cite{yehuda2021slicing}
can be removed 
by restricting to the non-trivial part of $P$.}

\begin{theorem}[\cite{yehuda2021slicing}] \label{theorem:product_dist}
There is a constant $C>0$ so that the following holds. 
Let $P$ be a product distribution on $\{\pm 1\}^n$.
Assume that the variance $\sigma_P^2$ of $\sum_j z_j$ for $z \sim P$ is positive.
Then, for every antichain $A \subset \{\pm 1\}^n$, we have
$\Pr_{z \sim P}[z \in A] \leq \frac{C}{\sigma_P}$.
\end{theorem}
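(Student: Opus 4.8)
The plan is to reduce the statement to the classical Littlewood--Offord / Sperner inequality by an averaging argument over maximal chains of the cube, weighted so as to follow the mass of $P$. Recall that a maximal chain is a sequence $\mathbf{-1}=x^0<x^1<\dots<x^n=\mathbf{1}$ of vertices in which consecutive ones differ in a single coordinate, and that every antichain meets every maximal chain in at most one vertex. It therefore suffices to produce a probability distribution $\nu$ on maximal chains of $\{\pm1\}^n$ and an absolute constant $c>0$ with the \emph{covering property}
\[
\Pr_{\mathcal C\sim\nu}[\,x\in\mathcal C\,]\ \geq\ c\,\sigma_P\,P(x)\qquad\text{for every }x\in\{\pm1\}^n.
\]
Indeed, summing this over $x$ in an antichain $A$ gives $c\,\sigma_P\,P(A)=\sum_{x\in A}c\,\sigma_P\,P(x)\leq\E_{\mathcal C\sim\nu}[\,|\mathcal C\cap A|\,]\leq 1$, which is the theorem with $C=1/c$. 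For the uniform measure one takes $\nu$ uniform over the $n!$ maximal chains, and the covering property reduces to $\binom{n}{|x|}\leq\binom{n}{\lfloor n/2\rfloor}\leq 2^n/\sqrt{n}$; the point is to carry this out for an arbitrary product measure.

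For a general $P$ the distribution $\nu$ has to be adapted to $P$, since uniform chains concentrate on the middle layer and miss the bulk of $P$ when $P$ is unbalanced. I would build the chain from the bottom, appending at each step a not-yet-used coordinate $j$ with a probability that grows with $p_j=\Pr_P[z_j=1]$ --- equivalently, ordering the coordinates by independent ``clocks'' whose laws are tuned to the $p_j$ --- so that, with appreciable probability, the chain sweeps through the heavy vertices of $P$. Write $x^*$ for the heaviest vertex, with $x^*_j=\mathrm{sign}(\E_P z_j)$; the covering bound is tightest at $x^*$ and its neighbourhood, and the key elementary input there is a bound on how peaked $P$ can be:
\[
\max_x P(x)=P(x^*)=\prod_j\max(p_j,q_j)\ \leq\ \exp\!\left(-\sum_j\min(p_j,q_j)\right)\ \leq\ \frac{1}{\sigma_P},
\]
where the last inequality uses $\sigma_P^2=\sum_j 4p_jq_j\leq 4\sum_j\min(p_j,q_j)$ together with $2\sqrt{t}\,e^{-t}\leq 1$ for all $t\geq 0$. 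In particular $\sigma_P\,P(x^*)\leq 1$, which is exactly the normalisation a chain distribution hitting $x^*$ with probability bounded below would need; the remaining targets $x$ are treated by conditioning on the coordinate set $S_x=\{j:x_j=1\}$ and reducing to a one-dimensional comparison between the $P$-mass of the interval above $x$ and the relevant binomial coefficients.

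I expect the main obstacle to be exactly the choice and analysis of $\nu$: one must pick the clock laws so that the covering property holds uniformly over all targets at once. When $P$ is very unbalanced there is a real tension --- a chain distribution biased towards the heavy vertices near $x^*$ tends to miss the light vertices near the middle layer, while a more spread-out distribution does the opposite --- so the correct $\nu$ has to interpolate, and verifying it is a genuine anti-concentration estimate for a $P$-adapted random ordering of the coordinates. As a consistency check, the theorem does yield the case needed for Claim~\ref{anti_chain_vertex}: replacing $z_j$ by $z_j\cdot\mathrm{sign}(v_j)$ --- a bijection of the cube carrying one product measure to another --- turns $v$ into a vector with strictly positive entries, and for such a $v$ the set $\{z:\langle v,z\rangle=\gamma\}$ contains no two comparable points, since increasing any coordinate strictly increases $\langle v,z\rangle$; hence it is an antichain, and the theorem applies.
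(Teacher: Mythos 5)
First, a structural point: the paper does not contain a proof of this theorem — it is imported verbatim from~\cite{yehuda2021slicing} (with only a footnote about removing a nondegeneracy hypothesis). So there is no ``paper's own proof'' here to compare against, and your proposal must be judged on its own.

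Your reduction to a chain-covering property is a sound and standard template, and the auxiliary inequality you verify, $\sigma_P\,P(x^*)\leq 1$ via $\prod_j\max(p_j,q_j)\leq e^{-\sum_j\min(p_j,q_j)}$ and $2\sqrt{t}e^{-t}\leq 1$, is correct. But the actual content of the theorem — constructing a distribution $\nu$ on maximal chains with $\Pr_{\mathcal C\sim\nu}[x\in\mathcal C]\geq c\,\sigma_P\,P(x)$ \emph{for every} $x$ simultaneously — is left entirely open, and you flag it yourself as ``the main obstacle.'' This is not a detail: it is the theorem. You correctly observe that the uniform chain distribution fails as soon as the coordinate biases are heterogeneous (e.g.\ half the coordinates near-deterministic and half uniform, where the heavy vertices sit far from the middle layer and uniform chains miss them by an exponential factor), so a $P$-adapted $\nu$ is genuinely needed; and you correctly observe that the requirement pulls in opposite directions for heavy and light targets. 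What you do not supply is a specific construction of the ``clocks'' nor any argument that the resulting $\nu$ meets the bound uniformly. Checking $\sigma_P P(x^*)\leq 1$ only normalizes the single heaviest vertex, which every chain hits anyway; it says nothing about the intermediate levels where the tension actually lives. As written, the proposal is an honest plan with a correct outer shell and a correct endpoint inequality, but the central estimate is missing, so it does not yet constitute a proof.
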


\begin{proof}[Proof of Claim~\ref{anti_chain_vertex}]
Let $s \in \{\pm 1\}^n$ be so that $s_j v_j>0$ for all $j$.
Let $P'$ be the distribution of $z' = z \odot s$ where $z \sim P$
and $\odot$ is coordinate-wise multiplication.
The distribution $P'$ is a product distribution and
$\sigma_P = \sigma_{P'}$.
Because $\langle z' , v \odot s\rangle = \langle z, v \rangle$,
we need to bound $\Pr[\langle z' , v \odot s\rangle = \mu]$.
The set $A$ of vertices $x$ so that $\langle x, v \odot s\rangle=\mu$
is an antichain.
The reason is that if $x  < y$
then $\langle x, v \odot s\rangle < \langle y, v \odot s\rangle$.
Theorem~\ref{theorem:product_dist} completes the proof.
\end{proof}

\subsection*{Acknowledgement}

We thank Nati Linial for helpful conversations.

\bibliographystyle{amsplain}
\bibliography{bib.bib}

\end{document}